\documentclass[11pt]{article}
\usepackage[left=3cm,right=3cm,top=3cm,bottom=3cm]{geometry}
\usepackage{amsmath,amssymb,amsthm}
\numberwithin{equation}{section}
\newtheorem{thm}{Theorem}[section]
\newtheorem{theorem}[thm]{Theorem}

\newtheorem{lemma}[thm]{Lemma}

\newtheorem{proposition}[thm]{Proposition}

\theoremstyle{definition}

\theoremstyle{remark}

\newtheorem{remark}[thm]{Remark}

\newtheorem{example}[thm]{Example}

\newcommand{\N}{\mathbb{N}}
\newcommand{\Z}{\mathbb{Z}}

\newcommand{\R}{\mathbb{R}}
\newcommand{\C}{\mathbb{C}}

\newcommand{\la}{\langle}
\newcommand{\ra}{\rangle}

\newcommand{\supp}{\mathrm{supp}}

\newcommand{\sfrac}[2]{#1/#2}
\newcommand{\ddp}[2]{\frac{\partial #1}{\partial #2}}

\newcommand{\half}{{\frac{1}{2}}}
\newcommand{\thalf}{{\tfrac{1}{2}}}

\newcommand{\tquarter}{{\tfrac{1}{4}}}
\newcommand{\spec}{\mathrm{spec}}
\newcommand{\msq}{\smash{m^2}}
\begin{document}
\title{A simple method for finite range decomposition \\ of quadratic forms and Gaussian fields}
\author{Roland Bauerschmidt}
%
\date{March 9, 2019}
\maketitle
\hyphenation{qua-dra-tic semi-def-in-ite-ness}
\begin{abstract}
  We present a simple method to decompose the Green forms corresponding to a large
  class of interesting symmetric Dirichlet forms into integrals over
  symmetric positive semi-definite and finite range (properly
  supported) forms that are smoother than the original Green form.
  This result gives rise to multiscale decompositions
  of the associated Gaussian free fields into sums of independent
  smoother Gaussian fields with spatially localized correlations.
  Our method makes use of the finite
  propagation speed of the wave equation and Chebyshev polynomials.
  It improves several existing results and also gives simpler
  proofs.
\end{abstract}

%

\section{Introduction and main result}

\subsection{The Newtonian potential}

Let us place the result of this paper into context through an example.
Consider the Newtonian potential, the Green's function of the Laplace
operator on $\R^d$ given by
\begin{equation} \label{eq:Newton-defn}
  \Phi(x) = C_d \begin{cases}
    |x|^{-(d-2)} & (d \geq 3)\\
    \log 1/|x| & (d=2)
  \end{cases}
  \quad \text{for all $x \in \R^d$, $x \neq 0$.}
\end{equation}

For $d\geq 3$ and \emph{any} measurable function $\varphi : \R \to \R$
such that $t^{d-3}\varphi(t)$ is integrable, the Newtonian potential can be
written, up to a constant, as
\begin{equation} \label{eq:Newton-decomp}
  |x|^{-(d-2)} = \int_0^\infty t^{-(d-2)} \, \varphi(|x|/t) \; \frac{dt}{t}
  \quad \text{for all $x \in \R^d$, $x\neq 0$.}
\end{equation}
This is true because both sides are radially symmetric and homogeneous
of degree $-(d-2)$, where homogeneity of the right-hand side simply
follows from the change of variables formula.
In particular,
$\varphi$ can be chosen smooth with compact support and such that
$\varphi(|x|)$ is a positive semi-definite function on $\R^d$.  The
last condition means that $\varphi(|x|)$ is positive as a quadratic form:
for any $f \in C_c^\infty(\R^d)$, that is, $f: \R^d \to \R$ smooth
with compact support,
\begin{equation} \label{eq:S-varphi}
  \Phi_t(f,f) := \int_{\R^d \times \R^d} \varphi(|x-y|/t) f(x)f(y) \; dx \, dy \geq 0.
\end{equation}

Similarly, if $d=2$, and $\varphi: \R \to \R$ is any absolutely
continuous function with $\varphi(0) =1$ and such that $\varphi'(t)$ is integrable,
then
\begin{equation} \label{eq:Newton-decomp-2}
  \log 1/|x| = \int_0^\infty (\varphi(|x|/t) - \varphi(1/t)) \; \frac{dt}{t}
  \quad \text{for all $x \in \R^2$, $x \neq 0$}
  .
\end{equation}
Indeed, for $x\neq 0$,
\begin{equation}
  \log 1/|x|
  = \varphi(0) \log 1/|x|
  = -\int_0^\infty  \varphi'(s) \log 1/|x|  \; ds
  = \int_0^\infty  \varphi'(s) \int_{s/|x|}^s \frac{dt}{t} \; ds
  ,
\end{equation}
and thus, since $\varphi'$ is integrable, by Fubini's theorem,
\begin{equation}
  \log 1/|x|
  =
  \int_0^\infty \int_{t}^{t|x|} \varphi'(s) \; ds \; \frac{dt}{t}
  =
  \int_0^\infty (\varphi(t|x|)-\varphi(t)) \; \frac{dt}{t}
  ,
\end{equation}
showing \eqref{eq:Newton-decomp-2} after the change of variables $t \mapsto 1/t$.
Now suppose again that $\varphi$ is chosen such that $\varphi(|x|)$ is
a positive semi-definite function on $\R^2$. Then the function $\R^2
\ni x \mapsto \varphi(|x|/t)-\varphi(1/t)$ is positive as a quadratic
form on the domain of smooth and compactly supported functions with
vanishing integral:
\begin{align} \label{eq:Newton-decomp-2-pd}
  \Phi_{t}(f,f)
  &:=
  \int_{\R^2\times \R^2} (\varphi(|x-y|/t)-\varphi(1/t)) f(x)f(y) \; dx \, dy 
  \\
  &= \int_{\R^2 \times \R^2} \varphi(|x-y|/t) \; f(x)f(y) \; dx \, dy \geq 0
  \nonumber
\end{align}
for all $f \in C_c^\infty(\R^2)$ with $\int f \; dx = 0$.

The above shows that the Newtonian potentials \eqref{eq:Newton-defn}
admit decompositions into integrals of compactly supported and
positive semi-definite functions, with the appropriate restriction of
the domain for $d=2$.

Let us only remark at this point that the positivity of a quadratic
form has the important implication that it entails the existence of a
corresponding Gaussian process, discussed briefly in
Section~\ref{sec:gaussian-fields}. It is however also of interest in
mathematical physics for different reasons \cite{MR1930084}.

\subsection{Finite range decompositions of quadratic forms}

It is an open problem to characterize the class of positive quadratic
forms, $S: D(S) \times D(S) \to \R$, that admit decompositions into
integrals (or sums)
of positive quadratic forms of finite range:
for all $f,g \in D(S)$, $t>0$,
\begin{align} \label{eq:S-decomp}
  &\left\{
  \begin{gathered}
  S(f,g) = \int_0^\infty S_t(f,g) \; \frac{dt}{t},
  \medskip
  \\
  S_t: D(S) \times D(S) \to \R,
  \\
  S_t(f,f) \geq 0,
  \\
  S_t(f,g) = 0
  \; \text{ if $d(\supp(f),\supp(g)) > \theta(t)$,}
\end{gathered}
\right.
\end{align}
where $\theta: (0,\infty) \to (0,\infty)$ is increasing and $d$ is a distance function.
The condition of \emph{finite range}, the last condition in
\eqref{eq:S-decomp},
generalizes the property of compact support of
the function $\varphi$ in \eqref{eq:S-varphi} to quadratic forms that
are not defined by a convolution kernel.
The difficulty in decomposing quadratic forms in such a way is to
achieve the two conditions of positivity and finite range
simultaneously.
Note that by splitting up the integral, one can obtain a decomposition into
a sum from \eqref{eq:S-decomp}, and conversely, a decomposition into a sum
can be written as an integral (without regularity in $t$).

For applications, not only the existence, but also the regularity of
the decomposition \eqref{eq:S-decomp} is important.
Let $(X,\mu)$ be a metric measure space, i.e.,
a locally compact complete separable metric space $X$ with a Radon measure $\mu$ on $X$
with full support (i.e., $\mu$ is strictly positive),
$C_c(X)$ the space of continuous functions on $X$ with compact support,
and $C_b(X)$ the space of bounded and continuous functions on $X$.
Let us say that the decomposition \eqref{eq:S-decomp} is regular if
$C_c(X) \cap D(S)$ is $S$-dense in $D(S)$ and if every $S_t$ has a
bounded continuous kernel $s_t \in C_b(X \times X)$:
\begin{equation}
  S_t(f,g) = \int s_t(x,y) f(x)g(y) \; d\mu(x) \, d\mu(y)
  \quad \text{for all $f,g \in C_c(X) \cap D(S)$}.
\end{equation}

For the decompositions \eqref{eq:Newton-decomp},
\eqref{eq:Newton-decomp-2}, the kernels are of course given in terms
of the smooth function $\varphi$ by the explicit formula
\begin{equation} \label{eq:s-Newton}
  \phi_t(x,y) = t^{-(d-2)} \varphi(|x-y|/t)
  \quad \text{for all $x,y \in \R^d$, $t>0$}.
\end{equation}
Note that for $d=2$ the second term in \eqref{eq:Newton-decomp-2} could be omitted
by \eqref{eq:Newton-decomp-2-pd}, with the understanding that the
quadratic form is restricted to functions with vanishing integral.
It follows in particular that
\begin{equation} \label{eq:s-decay}
  |\phi_t(x,y)| \leq C t^{-(d-2)}
  \quad \text{uniformly in all $x,y \in \R^d$.}
\end{equation}
This reflects the decay of the Newtonian potential. Moreover, for all integers $l_x, l_y \geq 0$,
the derivatives of the kernel $s_t$ decay according to
\begin{equation} \label{eq:grad-s-decay}
  |D_x^{l_x} D_y^{\smash{l_y}} \phi_t(x,y)| \leq C_l t^{-(d-2)} t^{-l_x-l_y},
\end{equation}
reflecting that $|D^l\Phi(x)| \leq C_l |x|^{-(d-2-l)}$ for all $x \in \R^d$, $x\neq 0$.

\medskip

The main result of this paper is a rather simple construction of
decompositions \eqref{eq:S-decomp} with estimates like
\eqref{eq:s-decay} for quadratic forms that arise by duality with
Dirichlet forms in a large class. Let us call such forms
\emph{Green forms} motivated by the Newtonian potential, or Green's
function, that is a special case.
This is explained in Section~\ref{sec:symmetric-forms}.

The main idea of our method is that \eqref{eq:S-decomp} can be
achieved by applying formulae like \eqref{eq:Newton-decomp} to the
spectral representation of the Green form, and then exploiting
finite propagation speed properties of appropriate wave flows.  These
are generalizations of the fact that if $u(t,x)$ is a solution to
\begin{equation} \label{eq:wave-equation}
  \partial_t^2 u-\Delta u = 0, \quad u(0,x) = u_0(x), \; \partial_t u(0,x) = 0
\end{equation}
with compactly supported initial data $u_0$ that then
\begin{equation} \label{eq:wave-prop-speed}
  \supp(u(t, \cdot)) \subseteq N_t(\supp(u_0))
\end{equation}
where $N_t(U) = \{x \in X: d(x,U) \leq t \}$ for any $U \subset X$.

The idea of exploiting properties of the wave equation in the context
of probability theory is not new.  For example, Varopoulos
\cite{MR822826} has
used the finite propagation speed of the wave equation to obtain Gaussian
bounds on the heat kernel of Markov chains, by decomposing it into
compactly supported pieces. 
Our objective is slightly different in that we are interested in
the constraint of positive definite decompositions.

Decompositions of singular functions into sums or integrals of smooth
and compactly supported functions have a history in analysis.  For
example, Fefferman's celebrated proof of pointwise almost everywhere
convergence of the Fourier series \cite{MR0340926} uses a
decomposition of $1/x$ on $\R$ like \eqref{eq:Newton-decomp}, albeit
without using positive semi-def\-in\-ite\-ness.
Hainzl and Seiringer \cite{MR1930084}, motivated by applications to
quantum mechanics such as \cite{MR864658}, decompose
general radially symmetric functions, without assuming a priori that they are
positive definite, into weighted integrals over \emph{tent functions}.
These, like $\varphi(|x|)$ in \eqref{eq:Newton-decomp}, are positive
semi-definite.  They state sufficient conditions for
the weight to be non-negative, and thus obtain decompositions like
\eqref{eq:Newton-decomp} for a class of radially
symmetric potentials including $e^{-m|x|}/|x|$ on $\R^3$.
Special cases and similar results have also
appeared in earlier works of P{\'o}lya \cite{MR0028541} and of
Gneiting \cite{MR1701408,MR1823914}.

These results, like \eqref{eq:Newton-decomp}, make essential use of
radial symmetry. One example of particular interest for probability
theory---where radial symmetry is not given---is the Green's function of
the discrete Laplace operator:
\begin{equation} \label{eq:discrete-Laplace}
  \Delta_{\Z^d}u(x) = 
  \sum_{e \in \Z^d: |e|_1=1} (u(x+e)-u(x))
  \quad \text{for any $u: \Z^d \to \R$, $x \in \Z^d$,}
\end{equation}
Brydges, Guadagni, and Mitter \cite{MR2070102} showed
that also in this discrete case, the corresponding Green's function, or more generally the
resolvent, admits a decomposition like \eqref{eq:S-decomp}
into a sum (instead of an integral) of positive semi-definite lattice functions
with estimates analogous to \eqref{eq:grad-s-decay}.
Brydges and Talaczyck \cite{MR2240180} gave a related construction
which applies to quite general elliptic operators on domains in
$\R^d$, but estimates on the kernels of this decomposition are only
known when the coefficients are constant.
Their construction was adapted by Adams, Koteck\'y, and M\"uller
\cite{AKM12a} to show that the Green's functions of constant coefficient
discrete elliptic systems on $\Z^d$ admit decompositions with estimates
analogous to \eqref{eq:grad-s-decay} and that the decomposition obtained this way
is analytic as a function of the (constant) coefficients.  These results are all based on a
constructions that average Poisson kernels.

Our method, as briefly sketched earlier, is
different from that of
\cite{MR2070102,MR2240180,BrydgesMitter12,AKM12a} and
yields simpler proofs of their results about constant coefficient
elliptic operators---both in discrete and continuous context.
It furthermore naturally yields a decomposition into an integral instead of a sum
(with integrand smooth in $t$),
and gives effective estimates for decompositions of Green's functions
of variable coefficient operators.

\subsection{Duality and spectral representation of the Green form}
\label{sec:symmetric-forms}

Let us now introduce the general set-up in which our result is framed
more precisely.  For motivation, we first return to the quadratic
forms defined by the Newtonian potentials \eqref{eq:Newton-defn}:
\begin{equation} \label{eq:Newton-form}
  \Phi(f,g)
  := \int_{\R^d \times \R^d} \Phi(x-y)f(x)g(y) \; dx \, dy
  ,
  \quad f,g \in D(\Phi)
\end{equation}
where
\begin{equation} \label{eq:Newton-form-domain}
  \begin{cases}
    D(\Phi) =  C_c^\infty(\R^d) & (d\geq 3)\\
    D(\Phi) =  \{ f \in C_c^\infty(\R^2): \int_{\R^2} f \; dx = 0\} & (d=2).
  \end{cases}
\end{equation}
These quadratic forms are not bounded on $L^2(\R^d)$, as is most
apparent when $d=2$.  They are closely related to the Dirichlet forms
given by
\begin{equation} \label{eq:Newton-Dirichlet}
  E(u,v)
  := \int_{\R^d} \nabla u \cdot \nabla v \; dx
  ,
  \quad
  u, v\in C_c^\infty(\R^d).
\end{equation}
The correspondence between the two is \emph{duality}: for all $f \in D(\Phi)$,
\begin{equation} \label{eq:Newton-duality}
  \Phi(f,f) = \sup \left\{ \int_{\R^d} fu \; dx: u \in C_c^\infty(\R^d), E(u,u) \leq 1 \right\}^{2}.
\end{equation}

This set-up admits the following natural generalization: Let $(X,\mu)$
always be a metric measure space and $L^2(X)$ be the Hilbert
space of equivalence classes of real-valued square $\mu$-integrable
functions on $X$ with inner product $(u,v)=(u,v)_{L^2}$.
Let $E: D(E) \times D(E) \to \R$ be a closed positive quadratic form
on $L^2(X)$ with $D(E) \subseteq L^2(X)$ a dense linear subspace.
It is sometimes convenient to assume that $E$ is regular, i.e., that
$C_c(X) \cap D(E)$ is $E$-dense in $D(E)$.
That $E$ is closed means that $D(E)$ is a Hilbert space
with inner product $E(u,v)+m^2(u,v)_{L^2}$ for any $m^2 >0$.
For the example \eqref{eq:Newton-Dirichlet}, the domain of the form closure
$D(E)$ of $C_c^\infty(\R^d)$ is the usual Sobolev space $H^1(\R^d)$ and
$(u,v)_{H^1} = E(u,v)+(u,v)_{L^2}$ is the usual Sobolev inner product.

It follows \cite{MR751959} from closedness that $E$ is the quadratic form
associated to a unique self-adjoint operator $L: D(L) \to L^2(X)$,
\begin{equation}
  E(u,v) = (u,Lv) \quad \text{for $u \in D(E)$, $v \in D(L)$}
  ,
\end{equation}
where $D(L)\subseteq D(E)$ is a dense linear subspace in $L^2(X)$.
Moreover, self-adjointness of $L$ gives rise to a spectral family and functional calculus.
This means in particular that for any Borel measurable $F : [0,\infty) \to \R$, there is a self-adjoint operator,
denoted $F(L) : D(F(L)) \to L^2(X)$, where
\begin{align}
  F(L) &:= \int_0^\infty F(\lambda) \; dP_\lambda,
  \\
  D(F(L)) &:= \left\{ u \in L^2(X): \int_0^\infty F(\lambda)^2 \; d(u, P_\lambda u)  < \infty \right\}
\end{align}
with $P_\lambda$ the spectral family associated to $L$, and $(u,
P_\lambda u)$ is the spectral measure associated to $L$ and $u \in
L^2(X)$.  In these terms, $E$ has the representation
\begin{equation} \label{eq:E-L12}
  E(u,u) = \|L^{\half}u\|_{L^2(X)}^{2}
  = \int_{\spec(L)} \lambda \; d(u,P_\lambda u),
  \quad u \in D(E) = D(L^{\half}),
\end{equation}
where $E(u,v)$ for $u \neq v$ is defined by the polarization identity.
Similarly, the corresponding Green form can be defined by
polarization and
\begin{equation} \label{eq:G-L12}
  \Phi(f,f) = \|L^{-\half}f\|_{L^2(X)}^{2}
  = \int_{\spec(L)} \lambda^{-1} \; d(u,P_\lambda u),
  \quad f\in D(\Phi) = D(L^{-\half}).
\end{equation}
This representation will be our starting point  
for the decomposition of the Green form.
Before stating the result and its proof, let us sketch how the
decomposition problem arises in probability theory.

\subsection{Gaussian fields and statistical mechanics}
\label{sec:gaussian-fields}

Even though the linear space $D(E)$ is complete under the metric induced
by the inner product $E(u,v)+m^2(u,v)_{L^2}$ for any $\msq > 0$, it is generally
not complete for $\msq = 0$. It may however be completed to a Hilbert
space abstractly; we denote this Hilbert space by $(H_E, (\cdot,\cdot)_E)$.
Similarly, we can complete the domain $D(\Phi)$ to a
Hilbert space under the quadratic form $\Phi$; this Hilbert space is
denoted by $(H_\Phi, (\cdot,\cdot)_\Phi)$.
$H_E$ and $H_\Phi$ are dual in the following sense:
The $L^2$ inner product can be restricted to 
\begin{equation}
  \la \cdot, \cdot \ra : D(\Phi) \times D(E) \to \R,
  \quad \la f, u \ra = (f,u) = (L^{-\half}f, L^{\half} u)
\end{equation}
which extends to a bounded bilinear form on $H_\Phi \times H_E$.
$L$ acts by definition isometric from $D(E)$ to $D(\Phi)$,
with respect to the norms of $H_E$ and $H_\Phi$, and
it extends to an isometric isometry from $H_E$
to $H_\Phi$. Thus $H_\Phi$ is identified with the dual space of $H_E$ naturally, via the
extension of the $L^2$ pairing $\la \cdot, \cdot \ra$.

\begin{remark}
To give some insight into the interpretation of the spaces $H_E$ and
$H_\Phi$, let us mention how $H_E$ can be characterized in the case of
the Newtonian potential \cite{MR2849840}:
\begin{multline}
  H_E \cong \{ f : \R^d \to \R \text{ measurable}:
  \\
  \text{there exists an $E$-Cauchy sequence $f_n \in D(E)$ with $f_n \to f$ a.e.} \} / \sim_d
\end{multline}
where $\sim_d$ is the usual identification of functions that are equal
almost everywhere when $d\geq 3$.
For $d=2$, $\sim_d$ in contrast identifies functions that may differ
by a constant almost everywhere.
(It is therefore sometimes said that the massless free field
does not exist in two dimensions, but that its gradient does.
The massless free field is the free field corresponding to $\Phi$ in the terminology explained below.)
To understand this distinction,
take a smooth cut-off function $\varphi_1$ on $\R^2$, e.g.\ with $\varphi_1
\equiv 1$ on $B_1(0)$ and $\varphi_1 \equiv 0$ on $B_2(0)^c$, set
$\varphi_n(x)=\varphi_1(x/n)$, and note that $E(\varphi_n,\varphi_n) =
n^{d-2} E(\varphi_1,\varphi_1)$. Thus, $(\varphi_n)$ is bounded in $H_E$ whenever
$d\leq 2$, and then (by the Banach-Alaoglu theorem) there is $\psi
\in H_E$ such that $\varphi_n \to \psi$ weakly along a subsequence in $H_E$; however,
$\varphi_n \to 1$ pointwise, so that $\psi \equiv 1 \in H_E$.
Now $E(1,1) = 0$ implies that the constant functions must be in the same equivalence
class as the zero function.
\end{remark}

It is well-known that any separable real Hilbert space $(H,(\cdot,
\cdot)_H)$ defines a Gaussian process indexed by $H$
\cite{MR544188}.  This is a probability space $(\Omega, P)$
and a unitary map $f \in H \to \la f, \phi \ra \in L^2(P)$ such
that the random variables $\la f, \phi \ra$ are Gaussian with
variance $(f,f)_H$.  Note that $\la f, \phi \ra$ is merely a
symbolic notation for the random variable on $L^2(P)$ that corresponds
to $f \in H$.  It cannot in general be interpreted as the pairing of
$f \in H$ with a random element $\phi(\omega) \in H$ defined for
$\omega \in \Omega$; see e.g.\ \cite{MR2322706}.

In particular, if $(H,(\cdot,\cdot)_H)$ is the Hilbert space
$(H_\Phi, (\cdot, \cdot)_{H_\Phi})$, this process
is called the \emph{free field} or the \emph{Gaussian free field}
(corresponding to Dirichlet form $E$ or Green's function $\Phi$).
The importance of free fields in statistical mechanics, and
probability theory in a wider sense, is well-recognized.  For
instance, observables of many models of statistical mechanics are
intricately related to them, by relations such as the the Kac--Siegert
transform \cite{MR2523458}. These models include spin models such as
the Ising model, as well as Coulomb and dipole systems.  In a
different direction, if $E$ is a Markovian form that satisfies
some regularity conditions, there exists an
associated Markov process \cite{MR2778606}, and it turns out that
there are strong connections between the distributions of the local
times of this Markov process and the free field associated to the same
Dirichlet form; see e.g.\
\cite{Syma69,MR648362,MR756768,MR734803,MR693227}.  In particular,
in a generalized ``non-commutative'' notion of Gaussian processes that
are supersymmetric, this correspondence becomes especially
striking; see e.g.\ the review \cite{MR2525670}.  The last mentioned
correspondence is the point of departure for an analysis of the
critical behavior of models of self-avoiding walks in dimension four
\cite{BS-saw4}.

For typical applications to statistical mechanics, the measure space
$(X,\mu)$ of Section~\ref{sec:symmetric-forms} is endowed with
additional structure such as a distance function,
a notion of smoothness, etc.\ as is the case for the
Newtonian potential.  The global properties of the free field are of
special interest for statistical mechanics.  An example of such a
global property is, if $X$ is an infinite graph, and $X_n \uparrow X$
is an increasing sequence of finite graphs approximating $X$, in an
appropriate sense, the behavior of
\begin{equation}
  \int \prod_{x\in X_n} e^{-V(\phi_x)} \; dP(\phi), \quad \text{as $n\to \infty$}
\end{equation}
for some $V: \R \to \R$.  The covariance $\Phi$ is typically 
long-range as in \eqref{eq:Newton-defn}.
This makes the analysis of the global properties of free fields difficult.

Decompositions like \eqref{eq:Newton-form} give rise to notions of
\emph{scale} and corresponding \emph{multiscale decompositions} of the
Gaussian free field and therefore provide a point of departure for
multiscale analysis. One instance of such an application is the
renormalization group method; see e.g.\ \cite{MR2523458} and references
therein.

\subsection{Main result}

Let $(X,\mu)$ be a metric measure space.
In addition, let $d : X \times X \to [0,\infty]$ be an extended pseudometric on $X$.
(\emph{Extended} means that $d(x,y)$ may be infinite
and \emph{pseudo} that $d(x,y) = 0$ for $x\neq y$ is allowed.
Example~\ref{ex:systems} below gives an example of interest where $d$ is not
the metric of $X$.)

Let $E: D(E) \times D(E) \to \R$ be a regular closed symmetric form on $L^2(X)$
as in Section~\ref{sec:symmetric-forms} and
denote by $L : D(L) \to L^2(X)$ the self-adjoint generator of $E$. 
Theorem~\ref{thm:decomposition} assumes that $(X,\mu,d,E)$
satisfies one of the following two \emph{finite propagation speed} conditions
that we now introduce:
For $\gamma > 0$, $B>0$, and an increasing function $\theta:(0,\infty)\to(0,\infty)$,
let us say that $(X,\mu,d,E)$ satisfies \eqref{eq:prop-speed-gamma} 
respectively \eqref{eq:polynom-gamma} if:
\begin{alignat}{1} \label{eq:prop-speed-gamma} \tag{$P_{\gamma,\theta}$}
  &\begin{aligned}\supp(\cos(L^{\half \gamma} t)u) \subseteq N_{\theta(t)}(\supp(u))
    \quad \text{for all $u \in C_c(X)$, $t > 0$,}
    \end{aligned}
  \\
  \intertext{respectively}
  \label{eq:polynom-gamma} \tag{$P_{\theta,B}^{\smash{*}}$}
  &\begin{aligned}
  &E(u,u) \leq B\|u\|_{L^2(X)}
  \quad \text{for all $u \in L^2(X)$},
  \\
  &\supp(L^{n}u) \subseteq N_{\theta(n)}(\supp(u))
  \quad \text{for all $u \in C_c(X)$, $n \in \N$,}
  \end{aligned}
\end{alignat}
where as before $N_t(U) = \{x \in X: d(x,U) \leq t \}$ for any $U \subset X$.
The left-hand side of \smash{\eqref{eq:prop-speed-gamma}} is defined
in terms of functional calculus for the self-adjoint operator $L$.

Note that if $L=-\Delta_{\R^d}$
is the Laplace operator of $\R^d$, then
$u(t,x) = [\cos(L^{\half}t)u_0](x)$ is a solution to the standard wave equation \eqref{eq:wave-equation},
and the condition \eqref{eq:prop-speed-gamma} with $\gamma=1$ and $\theta(t)=t$ is
the finite propagation speed property \eqref{eq:wave-prop-speed}.
The property
holds for more general elliptic operators and
elliptic systems (not necessarily of second order), however;
see Example~\ref{ex:systems} below.
Similarly, if $L =-\Delta_{\Z^d}$ is the discrete Laplace operator \eqref{eq:discrete-Laplace}, then
\smash{\eqref{eq:polynom-gamma}} holds with $B=2$ and $\theta(n) = n$, since
$Lu(x)$ only depends on $u(y)$ when $x$ and $y$ are nearest neighbors.
As for the property \eqref{eq:prop-speed-gamma}, the condition \eqref{eq:polynom-gamma}
remains true for more general discrete Dirichlet forms;
see Examples~\ref{ex:systems}--\ref{ex:graphs}.

\medskip

Let us introduce a further condition: The heat kernel bound
\eqref{eq:H-alpha} holds when the heat semigroup $(e^{-tL})_{t>0}$ has
continuous kernels $p_t$ for all $t>0$ and
there is $\alpha>0$ and a bounded function $\omega: X \to \R_+$ such that
\begin{equation} \label{eq:H-alpha} \tag{$H_{\alpha,\omega}$}
  p_t(x,x) \leq \omega(x) t^{-\alpha/2}
  \quad \text{for all $x \in X$.}
\end{equation}
Criteria for \eqref{eq:H-alpha} are classic; see e.g.\ \cite{MR0100158}
for second-order elliptic operators
and also the discussion in the examples below.

\begin{theorem} \label{thm:decomposition}
  Suppose $(X,\mu,d,E)$ satisfies \eqref{eq:prop-speed-gamma} or
  \eqref{eq:polynom-gamma}.  Then the corresponding Green form
  \eqref{eq:G-L12} admits a finite range decomposition
  \eqref{eq:S-decomp} with $S=\Phi$ and $S_t=\Phi_t$ 
  such that the $\Phi_t$ are bounded quadratic forms with
  \begin{equation}
    |\Phi_t(f,g)| \leq C_{\gamma,B} t^{\sfrac{2}{\gamma}} \|f\|_{L^2(X)}\|g\|_{L^2(X)}
    \quad \text{for all $f,g \in L^2(X)$.}
  \end{equation}
  Moreover, \eqref{eq:H-alpha} implies that the $\Phi_t$ have continuous kernels $\phi_t$ that satisfy
  \begin{equation} \label{eq:w-decay}
    |\phi_t(x,y)| \leq C_{\alpha,\gamma,B} \sqrt{\omega(x)\omega(y)} t^{-(\alpha-2)/\gamma}
    .
  \end{equation}
  In the discrete case \eqref{eq:polynom-gamma}, we used the convention $\gamma=1$
  and assumed that $t \geq 1$. For $t<1$ instead the explicit formula
  $\phi_t(x,y) = C {\bf 1}_{x=y} t$ holds.
\end{theorem}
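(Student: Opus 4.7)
The plan is to lift, via functional calculus, an explicit decomposition of the spectral symbol $\lambda^{-1}$ to the operator $L^{-1}$, starting from the representation $\Phi(f,g) = \int_{\spec L}\lambda^{-1}\,d(f,P_\lambda g)$ in \eqref{eq:G-L12}. Fix an even Schwartz function $\chi:\R\to\R$ whose Fourier transform $\hat\chi \in C_c^\infty(\R)$ is supported in $[-T/2,T/2]$, and set $\psi := \chi^2$, normalized so that $\int_0^\infty s^{2/\gamma - 1}\psi(s)\,ds = 1$. Then $\psi$ is non-negative, Schwartz, and has $\hat\psi$ supported in $[-T,T]$. The substitution $s = t\lambda^{\gamma/2}$ yields the elementary identity
\begin{equation*}
\lambda^{-1} \;=\; \int_0^\infty t^{2/\gamma}\psi(t\lambda^{\gamma/2})\,\frac{dt}{t} \qquad (\lambda > 0),
\end{equation*}
so I define $\Phi_t(f,g) := t^{2/\gamma}(f,\psi(tL^{\gamma/2})g)_{L^2}$. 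Integrating the spectral identity and using Fubini give $\Phi = \int_0^\infty \Phi_t\,dt/t$; positivity $\psi\geq 0$ together with the spectral theorem yields $\Phi_t(f,f)\geq 0$; and $\|\psi\|_\infty < \infty$ produces the $L^2$ bound $|\Phi_t(f,g)|\leq t^{2/\gamma}\|\psi\|_\infty\|f\|_2\|g\|_2$.

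The finite range property then uses the finite propagation assumption. Under \eqref{eq:prop-speed-gamma} the cosine Fourier inversion
\begin{equation*}
\psi(tL^{\gamma/2}) \;=\; \frac{1}{\pi}\int_0^T \hat\psi(\sigma)\,\cos(\sigma tL^{\gamma/2})\,d\sigma,
\end{equation*}
combined with the hypothesis that $\cos(\sigma tL^{\gamma/2})$ shifts supports by at most $\theta(\sigma t)\leq\theta(Tt)$, shows $\Phi_t$ has range $\theta(Tt)$. Under \eqref{eq:polynom-gamma}, $L$ is bounded with spectrum in $[0,B]$ and no continuous wave operator is available; I would instead use the Chebyshev polynomials $T_n$, which under the substitution $1 - 2L/B = \cos\phi$ satisfy $T_n(1-2L/B) = \cos(n\phi)$, are degree-$n$ polynomials in $L$ with range $\theta(n)$ by hypothesis, and behave like discrete cosines $\cos(cnL^{\smash{1/2}})$ since $\phi\sim 2\sqrt{L/B}$ for small $L$. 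A finite Chebyshev expansion of $\psi(tL^{\gamma/2})$ of length $\lesssim t$, or more cleanly a choice of $\psi$ whose transplanted Fourier series in $\phi$ is a trigonometric polynomial of degree $\lesssim t$, should yield finite range $\theta(Ct)$ in this case as well.

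The kernel estimate \eqref{eq:w-decay} under \eqref{eq:H-alpha} comes from the factorization $\Phi_t = t^{2/\gamma}\chi(tL^{\gamma/2})^*\chi(tL^{\gamma/2})$ and Cauchy--Schwarz, which give
\begin{equation*}
|\phi_t(x,y)| \;\leq\; t^{2/\gamma}\sqrt{\psi(tL^{\gamma/2})(x,x)\,\psi(tL^{\gamma/2})(y,y)}.
\end{equation*}
The diagonal bound is obtained by first deriving a pointwise spectral-cumulant estimate: from the Laplace representation $p_s(x,x) = \int_0^\infty e^{-s\lambda}\,dN_x(\lambda)$ with $N_x(\lambda) := P_\lambda(x,x)$, evaluating at $s = 1/\lambda$ and using $e^{-\mu/\lambda}\geq e^{-1}$ on $[0,\lambda]$, one obtains $N_x(\lambda)\leq e\omega(x)\lambda^{\alpha/2}$. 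Integration by parts in $\psi(tL^{\gamma/2})(x,x) = \int_0^\infty \psi(t\lambda^{\gamma/2})\,dN_x(\lambda)$, followed by the substitution $u = t\lambda^{\gamma/2}$, then bounds the diagonal by $e\omega(x)\bigl(\int_0^\infty u^{\alpha/\gamma}|\psi'(u)|\,du\bigr)\,t^{-\alpha/\gamma}$, with the last integral finite since $\psi\in\mathcal{S}(\R)$. Combining with Cauchy--Schwarz produces \eqref{eq:w-decay}. The main technical obstacle I anticipate is the polynomial case \eqref{eq:polynom-gamma}: arranging the Chebyshev substitution so as to produce a genuinely finite-range decomposition (not a mere approximation) while simultaneously preserving positivity, smoothness in $t$, and the scale-integral normalization is where the Chebyshev polynomials advertised in the abstract bear the essential burden of the argument.
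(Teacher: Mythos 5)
Your continuous-case argument ($P_{\gamma,\theta}$) is essentially the paper's: both pick $\psi \geq 0$ with $\hat\psi$ compactly supported, apply the scaling identity $\lambda^{-1}=\int_0^\infty t^{2/\gamma}\psi(t\lambda^{\gamma/2})\,dt/t$ to the spectral representation, and deduce finite range from the cosine inversion formula for $\psi(tL^{\gamma/2})$. Your kernel estimate is genuinely different and, in my view, cleaner. The paper goes through a Gamma-function representation of $(1+t^{2/\gamma}L)^{-l}$, establishes continuity of $e^{-tL}:M(X)\to L^2(X)$, and composes this with the bounded operator $\kappa_t(L)(1+t^{2/\gamma}L)^{l}$, whereas you extract the bound $N_x(\lambda)\leq e\,\omega(x)\lambda^{\alpha/2}$ directly from the heat kernel via a Chebyshev-type inequality and then integrate by parts against the Schwartz function $\psi$. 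The one thing to flush out there is why $N_x(\lambda)=P_\lambda(x,x)$ is well-defined in the first place (i.e.\ why the spectral projections have continuous kernels under $(H_{\alpha,\omega})$); the paper's $M(X)\to L^2(X)$ argument handles precisely this point, so some version of it is still needed before you can write $\psi(tL^{\gamma/2})(x,x)=\int\psi(t\lambda^{\gamma/2})\,dN_x(\lambda)$.

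The genuine gap is the discrete case $(P^{*}_{\theta,B})$, which you flag but do not close. Your instinct---transplant $\psi$ by the Chebyshev substitution and take a trigonometric polynomial of degree $\lesssim t$---is the right starting point, but the naive truncation of the expansion destroys both positivity and the exact decomposition identity, which is exactly why the paper does not truncate. The two ideas that resolve this, and that are missing from your sketch, are: (i) \emph{periodize} rather than truncate: set $\varphi_t^*(x)=\sum_{n\in\Z}\varphi((x-2\pi n)t)$, which is manifestly nonnegative and, by Poisson summation with $\supp\hat\varphi\subseteq[-1,1]$, equals $\sum_{|k|\leq t}t^{-1}\hat\varphi(k/t)\cos(kx)$, a cosine polynomial of degree $\leq t$, hence after $x=\arccos(1-\lambda/2)$ a genuine polynomial in $\lambda$ of degree $\leq t$; and (ii) verify the exact identity $\tfrac14\sin^{-2}(x/2)=\int_0^\infty t^2\varphi_t^*(x)\,dt/t$ by interchanging sum and integral and invoking the partial-fraction expansion $\tfrac14\sin^{-2}(x/2)=\sum_{n\in\Z}(x-2\pi n)^{-2}$, so that each periodized translate contributes a $(x-2\pi n)^{-2}$ and the decomposition is exact, not an approximation. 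Finally, a rescaling $\lambda\mapsto c\lambda/B$ with $c<4$ is needed to place $\spec(L)\subset[0,B]$ strictly inside the domain $[0,4)$ of $\arccos(1-\lambda/2)$; your choice $1-2L/B$ maps $\spec(L)$ onto all of $[-1,1]$, which puts the endpoint $\lambda=B$ at the singular value $x=\pi$ of the Chebyshev change of variables and breaks the derivative bounds needed for the smoothness and decay estimates.
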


\subsection{Examples}

\begin{example}[Elliptic operators with constant coefficients] \label{ex:constant-coefficients}
  Let $a = (a_{ij})_{i,j=1,\dots, d}$ be a strictly positive definite matrix in $\R^{d\times d}$ and
  \begin{alignat}{2}
    \label{eq:const-coeff-dirichlet}
    E_a({u}, {v})
    &= \sum_{i,j=1}^d \int_{\R^d} (D_i {u}(x)) a_{ij} (D_j{v}(x)) \; dx,
    &\qquad&
    u,v \in C_c^\infty(\R^d)
    ,
    \\
    \label{eq:const-coeff-dirichlet-discrete}
    E_a^*({u}, {v})
    &= \sum_{i,j=1}^d \sum_{x\in \Z^d} (\nabla_i {u}(x)) a_{ij} (\nabla_j {v}(x)),
    &\qquad&
    u,v\in C_c(\Z^d),
  \end{alignat}
  where $D_iu(x)$ is the partial derivative of ${u}(x)$ in direction $i=1,\dots,d$,
  \begin{equation}
    \nabla_i u(x) = u(x+e_i)-u(x)
  \end{equation}
  with $e_i$ the unit vector in the positive $i$th direction,
  and $C_c(\Z^d)$ is the space of functions $u:\Z^d \to \R$ with finite support.
  For $m^2 \geq 0$, further set
  \begin{equation}
    E_{a,\msq}(u,v) = E_a(u,v) + m^2 \int_{\R^d} u(x)v(x) \; dx
  \end{equation}
  and define $E_{a,\msq}^*$ analogously.
  Assume that the eigenvalues of $a$ are contained in the
  interval $[B_-^{\smash{2}}, B_+^{\smash{2}}]$, and in the discrete case
  also that $\msq \in [0, M_+^{\smash{2}}]$, for
  $B_-^{\smash{2}},B_+^{\smash{2}},M_+^{\smash{2}} > 0$;
  these assumptions are only important for uniformity in the constants below.
  
  In the continuous context, let $d$
  be the Euclidean distance on $X=\R^d$ and $\mu$ be the Lebesgue
  measure.
  It follows that $(X,\mu,d,E)$ satisfies \eqref{eq:prop-speed-gamma}
  with $\gamma=1$, $\theta(t) = B_+t$; 
  see Example~\ref{ex:systems} for more details.
  In the discrete context,
  let $d$ be the infinity distance on $X=\Z^d$, i.e., $d(x,y)=\max_{i=1,\dots d} |x_i-y_i|$,
  and $\mu$ be the counting measure.
  Then \smash{\eqref{eq:polynom-gamma}} holds with $B=B_+
  + M_+^{\smash{2}}$ and $\theta(n)=n$.

  Theorem~\ref{thm:decomposition} thus implies that the Green's functions
  associated to $E_{a,\msq}$ and $E_{a,\msq}^*$ admit finite range
  decompositions. Let us denote their kernels by 
  $\phi_t(x,y; a,\msq)$ and $\phi_t^*(x,y; a, \msq)$. 
  In addition to \eqref{eq:w-decay}, it is not difficult to obtain estimates
  on the decay of the derivatives of $\phi_t$ and $\phi_t^*$,
  like \eqref{eq:grad-s-decay}, in this situation of constant coefficients.
  Since these estimates are of interest for applications, we provide the details
  in Section~\ref{sec:constant-coefficients} (in a slightly more general context).
  We show that
  there are constants $C_{l,k}>0$ depending only
  on $B_-$ and $B_+$, and in the discrete case also on $M_+$, such
  that
  \begin{equation} \label{eq:const-coeff-w-est}
    |D_a^{l_a} D_{\msq}^{l_{\msq}} D_y^{\smash{l_y}} D_x^{l_x} \phi_t(x,y;a,m^2)|
    \leq C_{l,k} t^{-(d-2)-l_x-l_y+2l_{\msq}} (1+m^2t^2)^{-k} 
  \end{equation}
  and
  \begin{equation} \label{eq:const-coeff-w*-est}
    |D_a^{l_a} D_{\msq}^{l_{\msq}} \nabla_y^{\smash{l_y}} \nabla_x^{l_x} \phi_t^*(x,y,t;a,m^2)|
    \leq C_{l,k} t^{-(d-2)-l_x-l_y+2l_{\msq}} (1+m^2t^2)^{-k} 
  \end{equation}
  for all integers $l_a$, $l_{\msq}$, $l_x$, $l_y$, and $k$ such that 
  \begin{equation}
    l_{\msq} < \thalf(d + l_x+l_y),
  \end{equation}
  and that the following approximation result holds: There is $c>0$ such that
  \begin{equation} \label{eq:const-coeff-w-w*-approximation}
    \nabla_x^{l_x} \nabla_y^{\smash{l_y}} \phi_t^*(x,y; a, m^2)
    = D_x^{l_x} D_y^{\smash{l_y}} \phi_t(cx,cy; a,m^2)
    + O(t^{-(d-2)-l_x-l_y-1}(1+m^2t^2)^{-k})
    .
  \end{equation}
  In the discrete case, we have again assumed $t \geq 1$,
  whereas for $t<1$ the explicit formula $\phi_t^*(x,y,t;a,m^2)= C{\bf 1}_{x=y}t$ holds
  as stated below \eqref{eq:w-decay}.

  This reproduces and generalizes many results of \cite{MR2070102,AKM12a}.
  More precisely, we verify that there exists a smooth function
  $\bar \phi: \R^d \times [B_-^{\smash{2}},B_+^{\smash{2}}] \times
  [0,\infty) \to \R$ supported in $|x| \leq B_+$ such 
  that
  \begin{equation} \label{eq:const-coeff-w-hom}
    \phi_t(x,y;a,m^2) = t^{-(d-2)} \bar \phi(\frac{x-y}{t}; a,m^2t^2)
  \end{equation}
  which has the same structure as \eqref{eq:s-Newton} when $m^2=0$;
  this is scale invariance.
  Moreover, by \eqref{eq:const-coeff-w-w*-approximation},
  the discrete Green's function has a scaling limit
  and the error is of the order of the rescaled lattice spacing $O(t^{-1})$.
  This result improves \cite{BrydgesMitter12}.
\end{example}

\begin{example}[Elliptic operators and systems with variable coefficients] \label{ex:systems}
  Let $M \in \N$ and $a_{ij}: \R^d \to \R^{M\times M}$, $i,j=1,\dots, d$, be the smooth coefficients
  of a uniformly elliptic system (or in particular, if $M=1$, of a uniformly elliptic operator):
  \begin{equation}
    B_-^2 |{\bf \xi}|^2 \leq \sum_{k,l=1}^M \sum_{i,j=1}^d a_{ij}^{kl}(x) \xi_i^k \xi_j^l \leq B_+^2 |{\bf \xi}|^2
    \quad \text{for all ${\bf \xi} \in \R^{dM}$, $x \in \R^d$,}
  \end{equation}
  with $B_-,B_+ > 0$. Let us write ${\bf u} = ({\bf u}^1, \dots, {\bf u}^M) \in \R^{dM}$ with ${\bf u}^i \in \R^d$, $i=1, \dots, M$.
  Let
  \begin{equation} \label{eq:systems-dirichlet}
    E({\bf u}, {\bf v})
    = \sum_{i,j=1}^d \int_{\R^d} (D_i {\bf u}^k(x)) a_{ij}^{kl}(x) (D_j {\bf u}^l(x)) \; dx,
    \quad {\bf u}, {\bf v} \in C_c^\infty(\R^d, \R^M)
  \end{equation}
  and analogously in the discrete case
  (as in \eqref{eq:const-coeff-dirichlet}, \eqref{eq:const-coeff-dirichlet-discrete}).

  To apply Theorem~\ref{thm:decomposition}, $(X,\mu,d)$ is defined by
  $X = \R^d \times \{1, \dots, M\}$, $\mu$ is the product of the
  Lebesgue measure on $\R^d$ and the counting measure on $\{1, \dots, M\}$, and
  the distance is given by $d((x,i),(y,j)) = d(x,y)$. In particular,
  $d$ is only a pseudometric on $X$.
  We may use the identification of ${\bf u} : \R^d \to \R^M$
  and $u: X \to \R$ by $u(x,i) = {\bf u}^i(x)$.
  
  It suffices to verify the condition $(P_{1,B_+t})$ 
  for smooth, compactly supported ${\bf u}_0: \R^d \to \R^M$.
  For such a ${\bf u}_0$, set, by using spectral theory for self-adjoint operators:
  \begin{equation}
    {\bf u}(t) := \cos((L+m^2)^{\half}t){\bf u}_0.
  \end{equation}
  Then, since ${\bf u}_0$ is smooth,
  ${\bf u}(t,x): \R \times \R^d \to \R^M$
  is smooth jointly in $(t,x)$, and 
  %
  %
  \begin{equation} \label{eq:systems-wave-equation}
    \partial_t^2 {\bf u} + L{\bf u} + m^2 {\bf u} = 0, \quad \partial_t{\bf u}(0) = 0, \; {\bf u}(0) = {\bf u}_0
  \end{equation}
  holds in the classical sense.
  If $M=1$, $m^2=0$, and $a$ is the $d\times d$ identity matrix,
  $(P_{1,t})$ is the finite propagation speed of the wave equation.

  Similarly, in the general situation, the property $(P_{1,{B_+}t})$ can be deduced from
  the finite propagation speed of first order hyperbolic systems.
  This is well-known, but the explicit reduction
  for the case of \eqref{eq:systems-wave-equation} with \eqref{eq:systems-dirichlet} is difficult
  is to find in the literature. Let us therefore sketch how to convert \eqref{eq:systems-wave-equation}
  to a hyperbolic system for readers interested in this case.
  For example, one can define ${\bf v}: \R \times \R^d \to \R^{(d+2)M}$ by:
  \begin{equation}
    {\bf v}_{0}^k = \partial_{t} {\bf u}^k,
    \quad
    {\bf v}_{i}^k = \sum_{j=1}^d\sum_{l=1}^M a_{ij}^{kl} \partial_{x_j} {\bf u}^l,
    \quad
    {\bf v}_{d+1}^k = {\bf u}^k,
    \quad i=1, \dots, d, \; k=1, \dots, M,
  \end{equation}
  It follows that ${\bf v}$ satisfies
  \begin{equation}
    {\bf S}\partial_t{\bf v} + \sum_{j=1}^d {\bf A}_j \partial_{x_j} {\bf v} + {\bf B} {\bf v}= 0,
    \quad {\bf v}(0) = ({\bf 0}, (aD {\bf u}_0)^1, \dots, (aD {\bf u}_0)^d), {\bf u}_0)
  \end{equation}
  where ${\bf S}, {\bf A}_j, {\bf B}: \R^d \to R^{(d+2)M \times (d+2)M}$ are defined as the block matrices
  \begin{equation}
    {\bf S} = \begin{pmatrix}
      1_{M\times M} & 0_{dM\times M} & 0_{M\times M} \\
      0_{M\times dM} & a^{-1} & 0_{M\times dM} \\
      0_{M\times M} & 0_{dM\times M} & 1_{M\times M}
    \end{pmatrix},
    \quad
    {\bf B} = \begin{pmatrix}
      0_{1\times 1} & 0_{d\times 1} & m^2 \\
      0_{1 \times d}  & 0_{d\times d} & 0_{1\times d}  \\
      -m^2 & 0_{d\times 1} & 0_{1\times 1}
    \end{pmatrix} \otimes 1_{M\times M}
    ,
  \end{equation}
  and
  \begin{equation}
    {\bf A}_i = \begin{pmatrix}
      0 & -\delta_{1i} & \cdots & -\delta_{di} & 0\\
      -\delta_{1i} & 0 & \hdots & 0 & 0 \\
      \vdots & \vdots & \ddots & \vdots & 0 \\
      -\delta_{di} & 0 & \hdots  & 0 & 0\\
      0 & 0 & \cdots & 0 & 0
    \end{pmatrix} \otimes 1_{M\times M}
    ,\quad i=1, \dots, d
    .
  \end{equation}
  It is immediate that this system is symmetric uniformly hyperbolic, by the symmetry and uniform ellipticity of
  the matrix $a$. The property $(P_{1,B_+ t})$ now follows from the finite propagation speed of linear
  hyperbolic systems; see e.g.\ \cite{MR2524198,MR2273657}.

  Nash showed \cite{MR0100158} that $(H_{d,\omega})$ holds when $M=1$.
  In \cite{MR2091016,MR2425701}, conditions are given for $(H_{d,\omega})$ to hold when $M > 1$.
  In particular, this includes the constant coefficient case.
  The latter case can be treated by using the Fourier transform; see Section~\ref{sec:constant-coefficients}.
\end{example}

\begin{example}[Random walk on graphs] \label{ex:graphs}
  Let $(X,E)$ be a (locally finite) graph, with vertex set $X$ and edge set $E \subset P_2(X)$,
  where $X$ is a countable (or finite) set and $P_2(X)$ are the subsets of $X$ with two elements.
  Let $d : X \times X \to [0,\infty]$ be the graph distance on $(X,E)$, i.e., $d(x,y)$ is the (unweighted) length
  of the shortest path from $x$ to $y$.

  Suppose that edge weights $\mu_{xy}=\mu_{yx} \geq 0$, $x,y\in X$ are given.
  These induce a natural measure, also denoted $\mu$, on $X$ by:
  \begin{equation} \label{eq:graph-measure-defn}
    \mu_x = \sum_{y\in X} \mu_{xy}, \quad \mu(A) = \sum_{x\in A} \mu_x \quad \text{for all $A \subseteq X$}.
  \end{equation}
  The associated Dirichlet form is
  \begin{equation} \label{eq:graphs-Dirichlet}
    E(u,u) = \thalf \sum_{xy \in E} \mu_{xy} (u(x)-u(y))^2
    \quad \text{for all $u \in D(E) = L^2(\mu)$}
  \end{equation}
  and its generator is given by
  \begin{equation}
    Lu(x) = \mu_x^{-1} \sum_{y \in X} \mu_{xy} (u(x)-u(y))
    \quad \text{for all finitely supported $u: X \to \R$.}
  \end{equation}
  $L$ is called the \emph{probabilistic Laplace operator} associated
  to the simple random walk on the weighted graph $(X,\mu)$ with
  transition probabilities $\mu_{xy}/\mu_x$. Let us remark that a
  probabilistic interpretation (or a maximum principle) does not hold
  in general for
  Examples~\ref{ex:constant-coefficients}--\ref{ex:systems} (when $a$
  is non-diagonal or vector-valued).

  The Dirichlet form \eqref{eq:graphs-Dirichlet} is bounded on
  $L^2(\mu)$ with operator norm $2$ so that the property
  \smash{\eqref{eq:polynom-gamma}} holds with $\theta(n)=n$ and $B=2$,
  and Theorem~\ref{thm:decomposition} is applicable.

  For applications, it is often useful to add a killing rate to the
  random walk: The probabilistic Green density with killing rate
  $\kappa \in (0,1)$ is defined by:
  \begin{equation} \label{eq:graph-killed-Green}
    G^\kappa(x,y) = \sum_{n\geq 0} p^n(x,y) \kappa^n
    = (\kappa L + (1-\kappa))^{-1}(x,y) = (L^\kappa)^{-1}(x,y)
  \end{equation}
  where $p^n(x,y)$ is the kernel of the operator $P^n$ on $L^2(\mu)$.
  Note that \eqref{eq:graph-killed-Green} only converges for $\kappa=0$
  when the random walk is transient, but that $L^{-1}$ still makes sense
  as a quadratic form on its appropriate domain when the random walk is recurrent,
  as in \eqref{eq:Newton-form}, \eqref{eq:Newton-form-domain} for $d=2$.
  Note further that $\spec(L^\kappa) \subseteq [0,2]$ for all $\kappa \in
  [0,1]$, so that Theorem~\ref{thm:decomposition} is applicable
  uniformly in $\kappa \in [0,1]$.

  Closely related to the killed Green's function $G^\kappa$ is the resolvent kernel of $L$.
  The resolvent of $L$ is defined on $L^2(\mu)$ by $G_{\msq}= (L+m^2)^{-1}$ for $m^2 > 0$.
  It is related to the killed Green's density by:
  \begin{equation}
    G^\kappa = \kappa^{-1} G_{(1-\kappa)/\kappa}.
  \end{equation}
  One difference compared with the killed Green's function is that
  $L+m^2$ is not bound\-ed uniformly in $m^2 \geq 0$.  To achieve the
  condition \smash{\eqref{eq:polynom-gamma}} for fixed $B>0$, it is
  therefore necessary to restrict to $m^2 \leq M_+^2$ with $M_+^2 = B
  - 2$.
\end{example}  

\begin{remark}
  Other examples which Theorem \ref{thm:decomposition} is applicable to include
  Dirichlet spaces that satisfy a Davies-Gaffney estimate \cite{MR2114433} such as
  weighted manifolds and quadratic forms corresponding to powers of elliptic
  operators like $\Delta^2$.
\end{remark}

\subsection{Remarks}

\begin{remark}
  Theorem~\ref{thm:decomposition} also gives the decomposition into sums
  as in \cite{MR2240180,MR2070102,AKM12a}: Suppose that the
  assumptions of Theorem~\ref{thm:decomposition} are satisfied and,
  for notational simplicity, that the resulting decomposition has a
  kernel. Then, for any $L > 1$,
  \begin{equation}
    \Phi(x,y) = \sum_{j \in \Z} C_j(x,y)
    \quad \text{for all $x,y \in X \times X$}
  \end{equation}
  where the functions $C_j : X \times X \to [0,\infty)$, $j\in \Z$ are given by
  \begin{equation}
    C_j(x,y)
    := \int_{L^{j-1}}^{L^{j}} \phi_t(x,y) \; \frac{dt}{t}
    \quad \text{for all $x,y \in X$.}
  \end{equation}
  They satisfy the following properties:
  \begin{gather}
    C_j \text{ is the kernel of a positive semi-definite form},
    \\
    C_j(x,y) = 0
    \quad \text{for all $x,y \in X$ with $d(x,y) \geq L^j$,}
  \end{gather}
  and, if \eqref{eq:H-alpha} holds, 
  \begin{equation}
    |C_j(x,y)|
    \leq
    c_\alpha(x,y) \begin{cases}
      L^{-(\alpha-2)(j-1)} & (\alpha > 2)
      \\
      L^{(2-\alpha)j} & (\alpha < 2)
      \\
      \log(L) &(\alpha = 2)
    \end{cases}
  \end{equation}
  with $c_\alpha(x,y)$ is independent of $L$.  Thus, $(C_j)_{j\in\Z}$
  is a finite range decomposition into discrete scales of the Green's
  function $\Phi$.  Similarly, gradient estimates such as
  \eqref{eq:const-coeff-w-est}, \eqref{eq:const-coeff-w*-est},
  \eqref{eq:const-coeff-w-w*-approximation} in
  Example~\ref{ex:constant-coefficients} have obvious
  discrete versions.
\end{remark}

\begin{remark}
  More generally than in Theorem~\ref{thm:decomposition}, we may consider a \emph{family} of symmetric forms, $E^s$, $s \in Y$, where $Y$
  is a domain in a Banach space, with generators $L^s$. Let us assume
  that $E^s$ is smooth in $s$, in the following sense: There exists a projection-valued measure
  $P$ on a measurable space $M$ and a function $V: M \times Y \to (0,\infty)$, smooth in $Y$, such that
  \begin{equation} \label{eq:smooth-family}
    F(L^s)
    = \int_{\spec(L^s)} F(\lambda) \; dP_\lambda^s
    = \int_M F(V(s, \tau)) \; dP_\tau.
  \end{equation}
  An example of this condition is $E^{s}(f,f) = E(f,f) +
  s(f,f)$ in which case $V(s, \lambda) = \lambda + s$ and
  $(L^{s})^{-1}$ is the resolvent of $L$; similarly, the killed
  Green's function of Example~\ref{ex:graphs} can be expressed in this way.  Then the family of
  kernels $\phi^s$ is continuous in $s$, and if \eqref{eq:H-alpha}
  holds for $s=0$, and $V(\lambda,s) \geq z^2(s)
  V(\lambda,0) + m^2(s)$, then
  \begin{equation} \label{eq:decay-w-eta}
    |\phi^\eta_t(x,y)| \leq C_{\alpha,\gamma,l} \sqrt{\omega(x)\omega(y)} (z(s)t)^{-(\alpha-2)/\gamma}
    (1+tm(s))^{-l}
    .
  \end{equation}
  This can be verified by a straightforward adaption of the proof of
  Theorem~\ref{thm:decomposition}.
\end{remark}

\section{Proof of Theorem~\ref{thm:decomposition}}

\subsection{Spectral decomposition}

The starting point for the proof is the spectral representation
of the Green form \eqref{eq:G-L12}:
\begin{equation}
  \Phi(f,f)
  = \int_{\spec(L)} \lambda^{-1} \; d(f, P_\lambda f)
  \quad\text{for all $f\in D(\Phi)$}
  ,
\end{equation}
where $f \in D(\Phi)$ implies that the integral can be restricted to
$\spec(L) \setminus 0$.
The main result follows by decomposition of the
function $\lambda^{-1}: \spec(L)\setminus 0 \to \R_+$. 
Different decompositions are needed under
the two conditions \eqref{eq:prop-speed-gamma}, \eqref{eq:polynom-gamma}.
The main idea of the proof is that decompositions with good properties
exist. The result that we prove after using it to
deduce Theorem~\ref{thm:decomposition} is summarized in the following lemma.

\begin{lemma}[Spectral decomposition] \label{lem:W}
  Suppose that $L$ satisfies \eqref{eq:prop-speed-gamma} or
  \eqref{eq:polynom-gamma}.
  Then there exists a smooth family of functions $W_t
  \in C^\infty(\R)$, $t>0$, such that for all $\lambda \in \spec(L)
  \setminus 0$, $t>0$, and all integers $l$,
  \begin{equation} \label{eq:W-decomp}
    \lambda^{-1} = \int_0^\infty t^{\frac{2}{\gamma}} W_t(\lambda) \; \frac{dt}{t},
  \end{equation}
  \begin{equation} \label{eq:W-pos}
    W_t(\lambda) \geq 0,
  \end{equation}
  \begin{equation} \label{eq:W-decay}
    (1+t^{\frac{2}{\gamma}}\lambda)^l W_t(\lambda) \leq C_{l} ,
  \end{equation}
  and that for all $u \in C_c(X)$,
  \begin{equation} \label{eq:W-support}
    \supp(W_t(L)u) \subseteq N_{\theta(t)}(\supp(u))
    .
  \end{equation}
  In the discrete case \eqref{eq:polynom-gamma},
  we use the convention $\gamma=1$ and have assumed $t \geq 1$ for \eqref{eq:W-decay}.
  For $t<1$ one instead has the explicit formula $W_t(\lambda) = C/t$ for a constant $C>0$.
\end{lemma}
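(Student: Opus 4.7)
The plan is to construct $W_t$ explicitly as the square of a carefully chosen spectral bump, using the cosine representation of the wave kernel in Case \eqref{eq:prop-speed-gamma} and its Chebyshev polynomial analogue in Case \eqref{eq:polynom-gamma}. Fix once and for all a smooth even non-negative function $\chi:\R\to\R$ with $\supp\chi\subseteq[-\half,\half]$, and let $\hat\chi(\mu)=\int_\R\chi(s)\cos(\mu s)\,ds$; this is an even Schwartz function.

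Under \eqref{eq:prop-speed-gamma}, set
\[
  W_t(\lambda) := c_\gamma\,\hat\chi(t\lambda^{\gamma/2})^2,
  \qquad c_\gamma := \Bigl(\textstyle\int_0^\infty u^{2/\gamma-1}\hat\chi(u)^2\,du\Bigr)^{-1}.
\]
Positivity \eqref{eq:W-pos} is built in. The identity \eqref{eq:W-decomp} follows from the change of variables $u=t\lambda^{\gamma/2}$ in the $t$-integral, and the decay \eqref{eq:W-decay} follows from the Schwartz bound $(1+\mu^{2/\gamma})^l\hat\chi(\mu)^2\leq C_l$ applied with $\mu=t\lambda^{\gamma/2}$. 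For the support property \eqref{eq:W-support}, functional calculus yields $\hat\chi(tL^{\gamma/2})u=\int_{-1/2}^{1/2}\chi(s)\cos(stL^{\gamma/2})u\,ds$, so \eqref{eq:prop-speed-gamma} together with $|s|\leq\half$ gives $\supp(\hat\chi(tL^{\gamma/2})u)\subseteq N_{\theta(t/2)}(\supp u)$; applying the operator twice at most doubles this radius, which is absorbed into $\theta$ by a universal constant.

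Under \eqref{eq:polynom-gamma} with $\gamma=1$ the continuous wave kernel $\cos(sL^{1/2})$ does not have finite propagation (only polynomials in $L$ do), so one replaces it by its Chebyshev surrogate. Since $\spec(L)\subseteq[0,B]$ and $T_n(\cos\phi)=\cos(n\phi)$, the polynomial $T_n(I-2L/B)$ in $L$ has degree $n$ and therefore satisfies $\supp(T_n(I-2L/B)u)\subseteq N_{\theta(n)}(\supp u)$ by \eqref{eq:polynom-gamma}; on the spectrum it equals $\cos(n\phi(\lambda))$ with $\phi(\lambda)=\arccos(1-2\lambda/B)\sim 2\sqrt{\lambda/B}$ near $\lambda=0$. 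The spectral bump $\hat\chi(tL^{1/2})$ is replaced by the discrete analogue $Q_t(L):=\sum_n\chi(n/t)\,T_n(I-2L/B)$, a polynomial in $L$ of degree $\leq t/2$; one then sets $W_t(\lambda):=c\,Q_t(\lambda)^2/t^2$ with the normalization $c$ chosen to make \eqref{eq:W-decomp} hold. Properties \eqref{eq:W-pos}--\eqref{eq:W-decay} follow by the same calculations as in Case 1, with $t\,\hat\chi(t\phi(\lambda))$ in place of $\hat\chi(t\lambda^{1/2})$ and the Riemann-sum error bounded via Euler--Maclaurin estimates; \eqref{eq:W-support} is immediate from the polynomial degree of $W_t$.

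The main obstacle is the simultaneous requirement in Case 2 of positivity and finite polynomial degree, which respectively yield \eqref{eq:W-pos} and \eqref{eq:W-support}: a naive polynomial truncation of $\hat\chi$ typically destroys positivity, whereas the natural positive construction $\hat\chi(tL^{1/2})^2$ is not polynomial in $L$. The Chebyshev substitution $\cos\phi\leftrightarrow 1-2\lambda/B$ resolves both demands at once, because a non-negative trigonometric polynomial of degree $\leq n$ in $\phi$ corresponds, via this substitution, to a non-negative polynomial of degree $\leq n$ in $\lambda$; writing $W_t$ as the square of a Chebyshev expansion of length $\lesssim t/2$ then automatically achieves both positivity and the required support property.
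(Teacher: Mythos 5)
Your Case~1 is essentially the paper's own construction: your $W_t(\lambda)=c_\gamma\hat\chi(t\lambda^{\gamma/2})^2$ is exactly the paper's $C\varphi(\lambda^{\gamma/2}t)$ with $\varphi=\hat\chi^2$, and since the Fourier transform of $\hat\chi^2$ is (up to constants) $\chi*\chi$, which is supported in $[-1,1]$, you could write $W_t(L)u=c_\gamma\int_{-1}^1(\chi*\chi)(s)\cos(stL^{\gamma/2})u\,ds$ directly and obtain \eqref{eq:W-support} with the stated radius $\theta(t)$; your ``apply the operator twice'' argument instead gives $N_{2\theta(t/2)}$, which is not $N_{\theta(t)}$ for general increasing $\theta$ (e.g.\ sublinear $\theta$) and so silently changes the $\theta$ in the conclusion. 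That is a small blemish, easily repaired.

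The genuine gap is in Case~2: your $W_t(\lambda)=c\,Q_t(\lambda)^2/t^2$ does not satisfy the exact decomposition identity \eqref{eq:W-decomp} for any single normalization constant $c$. Set $\phi=\arccos(1-2\lambda/B)$. By Poisson summation, $Q_t(\lambda)=\sum_n\chi(n/t)\cos(n\phi)=t\sum_m\hat\chi\bigl(t(\phi-2\pi m)\bigr)$, so your $W_t(\lambda)$ is (a constant times) the \emph{square of the periodized sum} $\bigl(\sum_m\hat\chi(t(\phi-2\pi m))\bigr)^2$, whereas the paper's $W_t^*$ is the \emph{sum of squares} $\sum_m\hat\chi(t(\phi-2\pi m))^2$, i.e.\ the periodization of $\varphi=\hat\chi^2$. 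Integrating $t^2W_t\,\tfrac{dt}{t}$, the diagonal terms $m=m'$ give $\sum_m(\phi-2\pi m)^{-2}\int_0^\infty u\hat\chi(u)^2\,du$, which via the partial fraction identity $\tfrac14\sin^{-2}(\phi/2)=\sum_m(\phi-2\pi m)^{-2}$ is exactly proportional to $\lambda^{-1}$ --- this is what the paper exploits. But your construction also produces off-diagonal cross terms
\[
  R(\phi)=\sum_{m\neq m'}\int_0^\infty t\,\hat\chi\bigl(t(\phi-2\pi m)\bigr)\,\hat\chi\bigl(t(\phi-2\pi m')\bigr)\,dt,
\]
which is bounded as $\phi\to 0$ while $\lambda^{-1}\sim 4/(B\phi^2)\to\infty$; hence $R(\phi)$ is not a constant multiple of $\lambda^{-1}$ and no choice of $c$ makes \eqref{eq:W-decomp} hold. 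Your ``Riemann-sum / Euler--Maclaurin'' remark only controls estimates like \eqref{eq:W-decay}, not the exact identity \eqref{eq:W-decomp}. The fix is to move the square inside the sum: define $W_t^*(\lambda)=\sum_n\hat\chi\bigl((\phi-2\pi n)t\bigr)^2$; positivity is then manifest, the Poisson-summation/Chebyshev expansion still shows it is a polynomial of degree at most $t$ (so \eqref{eq:W-support} holds by \eqref{eq:polynom-gamma}), and \eqref{eq:W-decomp} follows exactly from the $\csc^2$ partial-fraction identity term by term, which is the route the paper takes.
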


\begin{remark}
  More precisely, we will give explicit formulae for $W_t$ that imply
  (assuming $t\geq 1$ in the discrete case \eqref{eq:polynom-gamma})
  \begin{equation} \label{eq:W-decay-derivatives}
    (1+t^2\lambda)^l \lambda^{m} \left| \ddp{^m}{\lambda^m} W_t(\lambda)\right| \leq C_{l,m}
  \end{equation}
  for all $m$ and $l$, improving \eqref{eq:W-decay}.
  This improvement is used in Section~\ref{sec:constant-coefficients}.
\end{remark}

\begin{proof}[Proof of Theorem~\ref{thm:decomposition}]
  It follows from \eqref{eq:W-decomp} that, for any $f \in D(\Phi)$,
  \begin{align} \label{eq:G-decomp-proof}
    \Phi(f,f)
    &= \int_{\spec(L)} \left( \int_0^\infty t^{\frac{2}{\gamma}}  W_t(\lambda) \; \frac{dt}{t} \right) \; d(f,P_\lambda f)
    \\
    &= \int_{0}^\infty t^{\frac{2}{\gamma}} \left( \int_{\spec(L)}  W_t(\lambda) \; d(f,P_\lambda f) \right) \; \frac{dt}{t} \nonumber
    \\
    &= \int_{0}^\infty t^{\frac{2}{\gamma}} (f, W_t(L) f) \; \frac{dt}{t} \nonumber
    .
  \end{align}
  The exchange of the order of the two integrals in the equation above
  is justified by non-negativity of the integrand, by \eqref{eq:W-pos}.
  The latter also implies that $(f, W_t(L)f)
  \geq 0$ for all $f \in L^2(X)$.
  The polarization identity allows to recover $\Phi(f,g)$ for all $f,g \in D(\Phi)$.
  Finally, \eqref{eq:W-support} completes the verification of
  \eqref{eq:S-decomp} for $\Phi_t$ defined by
  \begin{equation}
    \Phi_t(f,g) = t^{\frac{2}{\gamma}} (f,W_t(L)g).
  \end{equation}

  In particular, for \eqref{eq:polynom-gamma} and $t<1$, 
  the formula $W_t(\lambda) = C/t$ and $\gamma=1$ imply $\Phi_t(f,g) = Ct(f,f)$.
  In the remaining cases,
  it remains to prove that \eqref{eq:H-alpha} implies \eqref{eq:w-decay}.
  The semigroup property and the continuity of $p_t$ imply that $p_t \in C_b(X,L^2(X))$ with
  \begin{gather}
    \label{eq:p-L2}
    \|p_t(x,\cdot)\|_{L^2(X)} = \int_X p_t(x,y)p_t(y,x) \; d\mu(y) = p_{2t}(x,x),
    \\
    \label{eq:p-L2-cont}
    \|p_t(x,\cdot)-p_t(y,\cdot)\|_{L^2(X)} = p_{2t}(x,x)+p_{2t}(y,y)-2p_{2t}(x,y) \to 0 \quad \text{as $x\to y$}.
  \end{gather}
  This implies that $e^{-tL}: L^2(X) \to C_b(X)$ is continuous (since $e^{-tL}f(x) = (p_t(x,\cdot), f)$).
  Duality then implies continuity of $e^{-tL}: C_b(X)^* \to L^2(X)$ (with respect to the
  strong topology on $C_b(X)^*$).
  Let $M(X) \subseteq C_b(X)^*$ be the space of signed finite Radon measures on $X$
  equipped with the weak-* topology. Let $m_i \in M(X)$ with $m_i \to 0$. Then:
  \begin{multline} \label{eq:p-seqcont}
    \|e^{-tL}m_i\|_{L^2(X)}
    = \biggl( \int_X \biggl( \int_X p_t(x,y) \; dm_i(y) \biggr)^2 \; d\mu(x) \biggr)^{\half}
    \\
    = \biggl( \int_X \int_X (p_t(y,\cdot), p_t(z,\cdot)) \; dm_i(y) \, dm_i(z) \biggr)^{\half}
    \to 0
  \end{multline}
  which means that $e^{-tL}:M(X) \to L^2(X)$ is continuous (because $X$ is separable and 
  therefore the weak-* topology of $M(X)$ is metrizable).
  This implies that $(1+t^{2/\gamma}L)^{-l}: M(X) \to L^2(X)$ is likewise continuous
  for all $l > \alpha/4$. To see this, we use the relation
  \begin{equation} \label{eq:Gamma-formula}
    (1+t^{2/\gamma}\lambda)^{-l} = \Gamma(l)^{-1} \int_0^\infty e^{-s} s^{l-1} e^{-st^{2/\gamma}\lambda} \; ds
  \end{equation}
  which holds by the change of variables formula and the
  definition of Euler's gamma function.
  The spectral theorem thus implies that, for any $u \in L^2(X)$,
  \begin{equation}
    \|(1+t^{2/\gamma}L)^{-l}u\|_{L^2(X)}
    \leq \Gamma(l)^{-1} \int_0^\infty e^{-s} s^{l-1} \|e^{-st^{2/\gamma} L} u\|_{L^2(X)}\; ds
    .
  \end{equation}
  Since $\mu$ has full support, $L^2(X) \cap M(X)$ 
  is dense in $M(X)$ (where $L^p(X)$ is always with respect to $\mu$),
  and the claimed continuity of
  $(1+t^{2/\gamma}L)^{-l}:M(X) \to L^2(X)$ follows from \eqref{eq:p-seqcont}.
  In particular, the pointwise bound for $p_t$ implies that for $l > \alpha/4$,
  \begin{align} \label{eq:heat-kernel-Gamma-bound}
    \|(1+t^{2/\gamma}L)^{-l}\delta_x\|_{L^2(X)}
    &\leq \Gamma(l)^{-1} \int_0^\infty e^{-s} s^{l-1} \|e^{-st^{2/\gamma} L} \delta_x\|_{L^2(X)}\; ds
    \\
    &\leq \Gamma(l)^{-1} \sqrt{\omega(x)} t^{-\alpha/2\gamma} \int_0^\infty e^{-s} s^{l-1-\alpha/4} \; ds
    \nonumber
    \\
    &= C \sqrt{\omega(x)} t^{-\alpha/2\gamma}
    \nonumber
    .
  \end{align}
  Let $\kappa_t(\lambda) = W_t(\lambda)^{1/2}$.
  Then \eqref{eq:W-decay} and the spectral theorem 
  also imply that
  \begin{equation}
    \|\kappa_t(L)(1+t^{2/\gamma}L)^l\|_{L^2(X)\to L^2(X)}
    = \sup_{\lambda > 0} \kappa_t(\lambda)(1+t^{2/\gamma}\lambda)^l
    \leq C_l,
  \end{equation}
  uniformly in $t>0$.
  It follows from \eqref{eq:heat-kernel-Gamma-bound} that $\kappa_t(L): M(X) \to L^2(X)$
  with
  \begin{equation} \label{eq:kappa-delta-bound}
    \|\kappa_t(L)\delta_x\|_{L^2} \leq C \sqrt{\omega(x)} t^{-\alpha/2\gamma}
    .
  \end{equation}
  Finally, by the Cauchy-Schwarz inequality,
  \begin{equation}
    |\phi_t(x,y)|
    =
    t^{2/\gamma} (\kappa_t(L) \delta_y, \kappa_t(L)\delta_x)
    \leq
    t^{2/\gamma} \|\kappa_t(L)\delta_y\|_{L^2(X)} \|\kappa_t(L)\delta_x\|_{L^2(X)}
  \end{equation}
  which,  with \eqref{eq:kappa-delta-bound},  proves \eqref{eq:w-decay}.
  The continuity of $\phi_t$ is implied by the continuity of
  $\kappa_t(L): M(X) \to L^2(X)$ and of $\delta_x$ in $x \in X$ (in the weak-* topology).
\end{proof}

\begin{remark}
  The decay for $\phi^\eta$ claimed in \eqref{eq:decay-w-eta} can be obtained by a straightforward
  generalization of the above argument, replacing \eqref{eq:Gamma-formula} by
  \begin{equation}
    (1+t^{2/\gamma}z^2\lambda + t^{2/\gamma}m^2)^{-l}
    = \Gamma(l)^{-1} \int_0^\infty e^{-s} s^{l-1} e^{-st^{2/\gamma}m^2} e^{-s z^2 t^{2/\gamma} \lambda}  \; ds
    .
  \end{equation}
\end{remark}

\begin{remark}
  Furthermore, by \eqref{eq:W-decay}, the operators $W_t(L)$ are
  smoothing for $t>0$, in the general sense that, for any $t>0$,
  \begin{equation}
    W_t(L): L^2(X) \to C^\infty(L),
    \quad 
    \text{where }
    C^\infty(L) := \bigcap_{n=0}^\infty D(L^n)
    \subset L^2(X)
  \end{equation}
  is the set of $C^\infty$-vectors for $L$; see \cite{MR0493420}.
  Standard elliptic regularity estimates imply e.g.\ that $C^\infty(L) = C^\infty(X)$
  when $E$ is the quadratic form associated to an elliptic operator with smooth coefficients.
\end{remark}

\subsection{Proof of Lemma~\ref{lem:W}}

To complete the proof of Theorem~\ref{thm:decomposition}, it remains
to demonstrate Lemma~\ref{lem:W}.
We first prove it under condition \eqref{eq:prop-speed-gamma} in Lemma~\ref{lem:W-cont} below;
this proof is quite straightforward using the assumption and \eqref{eq:Newton-decomp}.
Subsequently, we prove Lemma~\ref{lem:W} in the situation of condition \eqref{eq:polynom-gamma}
in Lemma~\ref{lem:W-discr}; here additional ideas are required.

To fix  conventions, let us define the Fourier transform of an integrable function $\varphi: \R \to \R$ by
\begin{equation}
  \hat \varphi(k) = (2\pi)^{-1} \int_{\R} \varphi(x) e^{-ikx} \; dx \quad \text{for all $k \in \R$}
  .
\end{equation}

\begin{lemma}[Lemma~\ref{lem:W} under \eqref{eq:prop-speed-gamma}] \label{lem:W-cont}
  For any $\varphi: \R \to [0,\infty)$ such that
  $\hat\varphi$ is smooth and symmetric with $\supp(\hat\varphi) \subseteq [-1,1]$,
  and for any $\gamma > 0$, there is $C>0$ such that
  \begin{equation} \label{eq:W-cont}
    W_t(\lambda) := C \varphi(\lambda^{\half \gamma}t)
  \end{equation}
  satisfies \eqref{eq:W-decomp}, \eqref{eq:W-pos}, \eqref{eq:W-decay},
  and also \eqref{eq:W-decay-derivatives}, for all $\lambda>0$, $t>0$;
  and if \eqref{eq:prop-speed-gamma} holds, then $(W_t)$ also satisfies \eqref{eq:W-support}.
\end{lemma}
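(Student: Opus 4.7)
The plan is to verify the four claimed properties for $W_t(\lambda)=C\,\varphi(\lambda^{\gamma/2}t)$ in turn, exploiting that the hypothesis $\hat\varphi\in C_c^\infty([-1,1])$ forces $\varphi$ to be a Schwartz function on $\R$ (indeed, by Paley--Wiener, the restriction to $\R$ of an entire function of exponential type $\leq 1$), and that symmetry of $\hat\varphi$ together with reality of $\varphi$ forces $\hat\varphi$ itself to be real-valued.

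For \eqref{eq:W-decomp}, I would apply the scaling substitution $u=\lambda^{\gamma/2}t$. Since $dt/t$ is scale invariant and $t^{2/\gamma}=u^{2/\gamma}\lambda^{-1}$,
\begin{equation*}
  \int_0^\infty t^{2/\gamma}\,\varphi(\lambda^{\gamma/2}t)\,\frac{dt}{t}
  \;=\;\lambda^{-1}\int_0^\infty u^{2/\gamma-1}\,\varphi(u)\,du;
\end{equation*}
the last integral is finite (Schwartz decay at infinity, $2/\gamma-1>-1$ at the origin) and strictly positive (as $\varphi\geq 0$ and $\hat\varphi\not\equiv 0$), so I would take $C$ to be its reciprocal. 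Property \eqref{eq:W-pos} is then immediate. For the decay estimates \eqref{eq:W-decay}--\eqref{eq:W-decay-derivatives}, differentiating $\lambda\mapsto\varphi(\lambda^{\gamma/2}t)$ via the chain rule produces a finite sum of terms $\lambda^{-m}(\lambda^{\gamma/2}t)^j\,\varphi^{(k)}(\lambda^{\gamma/2}t)$ with $j,k\leq m$; multiplying by $\lambda^m$ removes the inverse power of $\lambda$, leaving a polynomial in the variable $y=\lambda^{\gamma/2}t$ times a Schwartz function in $y$. Since $1+t^{2/\gamma}\lambda=1+y^{2/\gamma}$, rapid decay of $\varphi^{(k)}$ dominates this by any inverse power of $1+t^{2/\gamma}\lambda$, giving \eqref{eq:W-decay-derivatives} (the case $m=0$ being \eqref{eq:W-decay}).

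The substantive step is \eqref{eq:W-support}, where the support condition on $\hat\varphi$ and the finite propagation hypothesis \eqref{eq:prop-speed-gamma} enter. By Fourier inversion and reality/symmetry of $\hat\varphi$,
\begin{equation*}
  \varphi(x)\;=\;\int_{-1}^{1}\hat\varphi(k)\cos(kx)\,dk\qquad(x\in\R).
\end{equation*}
Inserting $x=\lambda^{\gamma/2}t$ and applying the functional calculus for $L$ (interchanging the integral in $k$ with the spectral integral by Fubini, justified since $\hat\varphi\in L^1$ and $\cos(kt L^{\gamma/2})$ is a contraction on $L^2(X)$ uniformly in $k$), one obtains the operator-valued identity
\begin{equation*}
  W_t(L)u\;=\;C\int_{-1}^{1}\hat\varphi(k)\,\cos(ktL^{\gamma/2})u\,dk\qquad(u\in L^2(X)).
\end{equation*}
For each fixed $k\in[-1,1]$, using evenness of $\cos$ and monotonicity of $\theta$, the hypothesis \eqref{eq:prop-speed-gamma} gives
\begin{equation*}
  \supp(\cos(ktL^{\gamma/2})u)\;\subseteq\;N_{\theta(|k|t)}(\supp u)\;\subseteq\;N_{\theta(t)}(\supp u),
\end{equation*}
and this containment is preserved under the integration in $k$, proving \eqref{eq:W-support}.

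The only genuine obstacle I foresee is the last interchange: promoting the scalar Fourier representation of $\varphi$ to the operator identity for $W_t(L)$. This should reduce to a routine Bochner-integral or spectral-theorem argument since $k\mapsto\cos(ktL^{\gamma/2})$ is strongly continuous and uniformly bounded on $L^2(X)$, and $\hat\varphi\in L^1([-1,1])$; once the operator identity is in hand, the finite-range property falls out immediately.
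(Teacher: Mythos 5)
Your proposal is correct and follows essentially the same route as the paper's proof: homogeneity/rescaling of the $dt/t$ integral for \eqref{eq:W-decomp} with $C$ the reciprocal of $\int_0^\infty u^{2/\gamma-1}\varphi(u)\,du$; chain rule (Fa\`a di Bruno) combined with Schwartz decay of $\varphi$ for \eqref{eq:W-decay}--\eqref{eq:W-decay-derivatives}; and Fourier inversion $\varphi(x)=\int_{-1}^1\hat\varphi(k)\cos(kx)\,dk$ promoted via spectral calculus to $W_t(L)u=C\int_{-1}^1\hat\varphi(k)\cos(ktL^{\gamma/2})u\,dk$, after which \eqref{eq:prop-speed-gamma} gives \eqref{eq:W-support}. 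The interchange you flag as the one potential obstacle is exactly what the paper dispatches by interpreting the integral as a Riemann (strong) limit in $L^2$, as you anticipate.
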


\begin{remark}
  It is not difficult to see that such $\varphi$ exist. For example,
  if $\hat\kappa$ is a smooth real-valued function with support in $[-\half,\half]$, then
  $\varphi = |\kappa|^2$ satisfies the assumptions. 
  For simplicity, let us assume sometimes in the following that $\varphi$ is chosen such that
  $C=1$ when Lemma~\ref{lem:W} is applied.
\end{remark}

\begin{proof}
  Note that for any $\varphi:[0,\infty)\to\R$ with $t\varphi(t)$ integrable,
  there is $C > 0$ such that
  \begin{equation} \label{eq:1/lambda-decomp}
    \lambda^{-1}
    = C \int_0^\infty t^{\frac{2}{\gamma}} \varphi(\lambda^{\half \gamma} t) \; \frac{dt}{t}
    \quad \text{for all $\lambda>0$.}
  \end{equation}
  This simply follows (as in \eqref{eq:Newton-decomp}) because the right-hand side is homogeneous in $\lambda$ of degree $-1$, which is
  immediate by rescaling of the integration variable. This shows \eqref{eq:W-decomp};
  \eqref{eq:W-pos} is obvious by assumption; and \eqref{eq:W-decay} follows since $\hat\varphi$ is smooth.
  The improved estimate \eqref{eq:W-decay-derivatives} follows from the chain rule (or Fa\`a di Bruno's formula) and 
  \begin{equation} \label{eq:sqrt-lambda-derivative}
    \lambda^{m-\half\gamma} \left| \ddp{^m}{\lambda^m} \lambda^{\half\gamma} \right|
    \leq C_{\gamma,m}
  \end{equation}
  for non-negative integers $m$, using that $\supp(\hat\varphi) \subseteq [-1,1]$ implies that $\varphi$ is smooth.
  Moreover, since $\supp(\hat\varphi) \subset [-1,1]$, and since $\hat\varphi$ is smooth,
  \begin{equation}
    W_t(L)u = C \int_{-1}^1 \hat\varphi(s) \cos(L^{\half \gamma} ts)u \; ds
    \quad \text{for all $u \in L^2(X)$,}
  \end{equation}
  where the integral is the Riemann integral, i.e., the strong limit of its Riemann sums (with values in $L^2$).
  Therefore \eqref{eq:W-support} follows from \eqref{eq:prop-speed-gamma}.
\end{proof}

The previous proof makes essential use of the finite propagation speed
of the wave equation \eqref{eq:prop-speed-gamma} to prove
\eqref{eq:W-support}.  This property fails for discrete Dirichlet
forms such as \eqref{eq:const-coeff-dirichlet-discrete} where we
instead know the property \smash{\eqref{eq:polynom-gamma}} that
polynomials of degree $n$ of the generator have finite range
$\theta(n)$. 

This leads to the following problem. Find polynomials $W_t^*$, $t>0$,
of degree at most $t$ satisfying the properties
\eqref{eq:W-pos}, \eqref{eq:W-decay}, \eqref{eq:W-decay-derivatives} such that the
decomposition formula \eqref{eq:W-decomp} for $1/\lambda$ holds.  In
the proof of Lemma~\ref{lem:W-cont}, the verification of \eqref{eq:W-decay} (and
\eqref{eq:W-decay-derivatives}) and of the decomposition
formula \eqref{eq:W-decomp} are directly linked to the
``ballistic'' scaling of the wave equation: $W_t(\lambda)=W_1(\lambda t^2)$.
To construct polynomials satisfying such ``ballistic'' estimates,
we are led by the following remarkable discovery of Carne \cite{MR837740}:
The Chebyshev polynomials $T_k$, $k \in \Z$, defined by
\begin{equation} \label{eq:Chebyshev}
  T_k(\theta) = \cos(k\arccos(\theta))
  \quad \text{for all $\theta \in [-1,1]$, $k \in \Z$,}
\end{equation}
are solutions to the discrete (in space and time) wave equation in the following sense:
Let $\nabla_+f(n) = f(n+1)-f(n)$ and $\nabla_-f(n) = f(n-1)-f(n)$ be the discrete (forward and backward) time differences. 
Then, as polynomials in $X$,
\begin{equation}
  \nabla_-\nabla_+ T_n(X) = \nabla_+\nabla_- T_n(X) = 2(X-1) T_n(X).
\end{equation}
In particular, when $2(X-1)=-L$ or equivalently $X = 1-\half L$, then $v(n,x) = [T_n(1-\half L) u](x)$ solves
the following ``Cauchy problem'' for the discrete wave equation:
\begin{equation}
  - \nabla_+\nabla_- v + Lv = 0, \quad v(0) = u, (\nabla_+v-\nabla_-v)(0)=0
  .
\end{equation}
The analogy between the discrete- and the continuous-time wave equations is like that 
between the discrete- and the continuous-time random walk. 
It turns out that the structure of Chebyshev polynomials 
allows to prove the following lemma.

\begin{lemma}[Lemma~\ref{lem:W} under \eqref{eq:polynom-gamma}] \label{lem:W-discr}
  Let $\varphi: \R \to [0,\infty)$ satisfy the assumptions of Lemma~\ref{lem:W-cont}. Then
  $W_t^*: [0,4] \to [0,\infty)$, defined by
  \begin{equation} \label{eq:W-discr}
    W_t^*(\lambda) := \sum_{n \in \Z} \varphi(\arccos(1-\thalf \lambda) t-2\pi nt)
    \quad \text{for all $\lambda \in[0,4]$, $t>0$},
  \end{equation}
  is the restriction of a polynomial in $\lambda$ of degree at most $t$ to $[0,4]$,
  with coefficients smooth in $t$,
  and, for any $\varepsilon > 0$,
  \eqref{eq:W-decomp}, \eqref{eq:W-pos}, \eqref{eq:W-decay}, \eqref{eq:W-support}, 
  and \eqref{eq:W-decay-derivatives} hold for all $\lambda \in (0,4-\varepsilon]$, $t > 0$.
\end{lemma}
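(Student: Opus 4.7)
The plan is to realize $W_t^*$ simultaneously as (a) the sum over translates given in \eqref{eq:W-discr} and (b) a Chebyshev polynomial of degree $\leq t$. The bridge between these two views is Poisson summation, and once the dual representation is in hand all the required properties follow by separating the rôles: the sum view gives positivity, the decomposition formula, and the pointwise decay, while the polynomial view gives the support property and smooth polynomial structure.

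\textbf{Step 1: Poisson-summed / polynomial representation.} I first set $\theta=\theta(\lambda):=\arccos(1-\lambda/2)\in[0,\pi]$, so that $\lambda=2(1-\cos\theta)=4\sin^2(\theta/2)$, and I regard $F_t(\theta):=\sum_{n\in\Z}\varphi((\theta-2\pi n)t)$ as a $2\pi$-periodic function of $\theta$. Since $\hat\varphi$ is smooth and compactly supported, $\varphi$ is Schwartz (and, by the symmetry of $\hat\varphi$, real and even), so the sum converges absolutely. Poisson summation in the form $\sum_n\varphi((\theta-2\pi n)t)=t^{-1}\sum_{k\in\Z}\hat\varphi(k/t)\,e^{ik\theta}$, combined with $\supp\hat\varphi\subseteq[-1,1]$ and the evenness of $\hat\varphi$, collapses the series to
\begin{equation*}
  W_t^*(\lambda)\;=\;\frac1t\sum_{|k|\le t}\hat\varphi(k/t)\,\cos(k\theta)\;=\;\frac1t\sum_{|k|\le t}\hat\varphi(k/t)\,T_k(1-\lambda/2),
\end{equation*}
using \eqref{eq:Chebyshev}. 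This exhibits $W_t^*$ as (the restriction to $[0,4]$ of) a polynomial in $\lambda$ of degree at most $t$. Smoothness of the coefficients in $t$ follows because, at $|k|=t$, the factor $\hat\varphi(k/t)$ and all its derivatives vanish (the compact support of $\hat\varphi$ forces $\hat\varphi$ to be flat at $\pm1$), so the entering/leaving terms match smoothly.

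\textbf{Step 2: decomposition \eqref{eq:W-decomp}, positivity \eqref{eq:W-pos}, and support \eqref{eq:W-support}.} Positivity is immediate from $\varphi\geq 0$. For \eqref{eq:W-decomp} (with $\gamma=1$) I exchange sum and integral by positivity and, on each term with $\theta_n:=\theta-2\pi n\neq 0$, use the substitution $s=|\theta_n|t$ together with the evenness of $\varphi$ to get $\int_0^\infty t\,\varphi(\theta_n t)\,dt=\theta_n^{-2}\int_0^\infty s\varphi(s)\,ds$. Summing in $n$ and invoking the classical partial-fraction expansion
\begin{equation*}
  \sum_{n\in\Z}(\theta-2\pi n)^{-2}\;=\;\frac{1}{4\sin^2(\theta/2)}\;=\;\frac{1}{\lambda},
\end{equation*}
valid for $\theta\in(0,\pi)$, i.e.\ $\lambda\in(0,4)$, yields $\int_0^\infty t W_t^*(\lambda)\,dt=C/\lambda$ with $C=\int_0^\infty s\varphi(s)\,ds$; after absorbing $C$ into $\varphi$ this is \eqref{eq:W-decomp}. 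The support property \eqref{eq:W-support} now reads off from the polynomial representation: by \eqref{eq:polynom-gamma} each $L^k$ with $k\le t$ shifts supports by at most $\theta(k)\le\theta(t)$, so the same holds for any polynomial in $L$ of degree $\leq t$, in particular for $W_t^*(L)$.

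\textbf{Step 3: decay estimates \eqref{eq:W-decay} and \eqref{eq:W-decay-derivatives}.} Here I work from the series \eqref{eq:W-discr}. For $\lambda\in(0,4-\varepsilon]$, $\theta\in(0,\pi-\delta(\varepsilon)]$ and $\theta\asymp\sqrt\lambda$ uniformly, with $\theta'(\lambda)=1/\sqrt{\lambda(4-\lambda)}\asymp\lambda^{-1/2}$ and, by a straightforward induction, $|\theta^{(j)}(\lambda)|\leq C_j\lambda^{1/2-j}$. For the zeroth-order bound the $n=0$ contribution is $\varphi(\theta t)$, which by the Schwartz decay of $\varphi$ satisfies $|\varphi(\theta t)|\leq C_l(1+t\theta)^{-2l}\leq C_l'(1+t^2\lambda)^{-l}$ (using $(1+t\theta)^2\ge 1+t^2\theta^2\asymp 1+t^2\lambda$); the terms with $n\neq 0$ contribute at most $C_l(1+\pi t)^{-2l}$ per term and, being even smaller and summable since $|\theta-2\pi n|\geq \pi|n|$ for $|n|\geq 1$, are absorbed into the same bound (also using $\lambda\leq 4$). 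For \eqref{eq:W-decay-derivatives} I apply Faà di Bruno to $\lambda\mapsto\varphi((\theta(\lambda)-2\pi n)t)$: each $m$-th derivative is a sum of terms of the shape $t^p\,\varphi^{(p)}((\theta-2\pi n)t)\prod_j(\theta^{(j)})^{a_j}$ with $p=\sum a_j$ and $\sum j a_j=m$; the product of the $\theta^{(j)}$'s scales like $\lambda^{p/2-m}$, so multiplication by $\lambda^m$ leaves $(t\sqrt\lambda)^p$, which is controlled by $(t\theta)^p$ and therefore by the Schwartz decay of $\varphi^{(p)}$.

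\textbf{Main obstacle.} The conceptually substantial step is Step~1 — recognizing that Poisson summation in the variable $\theta$, combined with the constraint $\supp\hat\varphi\subseteq[-1,1]$, terminates the sum at $|k|\leq t$ and turns it into a Chebyshev polynomial in $1-\lambda/2$; this is exactly the discrete ``ballistic'' analogue of Lemma~\ref{lem:W-cont}. Once this identification is made, Steps~2 and~3 are mostly bookkeeping. The technically most delicate piece is the derivative estimate \eqref{eq:W-decay-derivatives}, where one must use the precise Faà di Bruno tally to absorb the $\lambda^{-m}$ blowup of $\theta^{(m)}(\lambda)$ near $\lambda=0$ into the extra factor $\lambda^m$ and the Schwartz decay of $\varphi^{(p)}$.
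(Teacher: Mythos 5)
Your proof follows the paper's approach almost exactly through the first two steps: the Poisson summation identity $\sum_n\varphi((\theta-2\pi n)t) = t^{-1}\sum_{|k|\le t}\hat\varphi(k/t)\cos(k\theta)$, the identification with a Chebyshev polynomial of degree $\le t$, the partial-fraction expansion $\tquarter\sin^{-2}(\theta/2) = \sum_n(\theta-2\pi n)^{-2}$, and the reduction of the finite-range property to \eqref{eq:polynom-gamma} via the polynomial representation. The one place you diverge is the verification of \eqref{eq:W-decay}--\eqref{eq:W-decay-derivatives}: you apply Fa\`a di Bruno directly to $\lambda\mapsto\varphi((\arccos(1-\thalf\lambda)-2\pi n)t)$, using $|\partial_\lambda^j\arccos(1-\thalf\lambda)|\lesssim\lambda^{1/2-j}$ on $(0,4-\varepsilon]$ and Schwartz decay of $\varphi^{(p)}$, then summing in $n$. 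The paper instead proves the intermediate bound \eqref{eq:varphi*-decay} on $\partial_x^m\varphi_t^*(x)$ by working on the Fourier-dual side: writing $(1-\cos x)e^{ikx}=\Delta_k e^{ikx}$ for a discrete Laplacian in $k$, summing by parts, and representing $\Delta_k^n f$ as a convolution $h^{*n}*D^{2n}f$ with a hat function $h$, which yields the estimate from $\ell^1$-bounds on $h^{*n}$; it then returns to $\lambda$ via the same $\arccos$-derivative bounds you use. The two arguments are Fourier-dual to one another, and yours is somewhat more elementary; the paper's version has the advantage of isolating \eqref{eq:varphi*-decay} with an explicit $t^{-m}$ scaling on the $x$-side.

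Two small caveats. First, in the Fa\`a di Bruno step you write that the factor $(t\sqrt\lambda)^p$ ``is controlled by $(t\theta)^p$ and therefore by the Schwartz decay of $\varphi^{(p)}$'', but for $n\neq 0$ the argument of $\varphi^{(p)}$ is $(\theta-2\pi n)t$, not $\theta t$; you need the observation (which you made in the $m=0$ case but did not restate here) that $|\theta-2\pi n|\ge\pi|n|\ge\theta$ for $|n|\ge1$, so $(t\theta)^p\le(t|\theta-2\pi n|)^p$ is still absorbed by the Schwartz decay, and then the sum over $n$ must be controlled separately. Second, both your proof and the paper's tacitly require $t\gtrsim1$ for the pointwise bound \eqref{eq:W-decay}: for $t<1$ only the $k=0$ Chebyshev mode survives and $W_t^*\equiv t^{-1}\hat\varphi(0)\to\infty$, so the claim $W_t^*(\lambda)\le C$ cannot hold uniformly as $t\to0+$ when $\hat\varphi(0)>0$ (which is forced by $\varphi\ge0$, $\varphi\not\equiv0$). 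This is a defect of the lemma as stated rather than of your argument; it is harmless downstream since for $t<1$ the range of $W_t^*(L)$ is zero and the decomposition \eqref{eq:W-decomp} still converges, but your phrase that the $n\neq0$ terms are ``summable'' and ``absorbed into the same bound'' is only accurate for $t\ge1$.
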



\begin{proof}
  The proof verifies that $W_t^*$ as defined in \eqref{eq:W-discr} has the asserted properties.
  Let
  \begin{equation} \label{eq:varphi*-defn}
    \varphi^*_t(x)
    := \sum_{n \in \Z} \varphi(xt-2\pi nt)
    = \sum_{k \in \Z} t^{-1}\hat\varphi(k/t) \cos(kx)
  \end{equation}
  where the second equality follows by symmetry of $\hat\varphi$,
  the change of variables formula,
  and a version of the Poisson summation formula which is easily verified,
  for sufficiently nice $\varphi$.
  Then the claim \eqref{eq:W-decomp} can be expressed as 
  \begin{equation} \label{eq:1/lambda-discr-decomp}
    \lambda^{-1} = \int_0^\infty t^{2} \varphi^*_t(\arccos(1-\thalf\lambda)) \; \frac{dt}{t}
    \quad \text{for all $\lambda \in (0,4]$}
    .
  \end{equation}

  Let $x = \arccos(1-\half \lambda)$ or equivalently $\lambda = 2(1-\cos x) =
  4 \sin^2(\frac{1}{2} x)$.  In terms of this change of variables,
  \eqref{eq:1/lambda-discr-decomp} and thus the claim
  \eqref{eq:W-discr} are then equivalent to
  \begin{equation} \label{eq:sin2-decomp}
    \tquarter
    \sin^{-2}(\thalf x)
    = \int_0^\infty t^{2} \varphi^*_t(x) \; \frac{dt}{t}
    \quad \text{for all $x \in (0,\pi]$.}
  \end{equation}
  The left-hand side defines a meromorphic function on $\C$ with poles at $2\pi \Z$.
  Its development into partial fractions is (see e.g.\ \cite[page 204]{MR510197})
  \begin{equation}
    \tquarter
    \sin^{-2} (\thalf x) = \sum_{n \in \Z} (x-2\pi n)^{-2}
    \quad \text{for all $x \in \C \setminus 2\pi\Z$}.
  \end{equation}
  It follows, by \eqref{eq:1/lambda-decomp} with $\gamma=1$ and $\lambda = (x-2\pi n)^{2}$, assuming $C=1$, that
  \begin{equation}
    \tquarter
    \sin^{-2} (\thalf x) = \sum_{n \in \Z} \int_0^\infty t^{2}  \varphi((x-2\pi n)t) \; \frac{dt}{t}
    \quad \text{for all $x \in (0,\pi]$}.
  \end{equation}
  The order of the sum and the integral can be exchanged, by non-negativity of the integrand,
  thus showing \eqref{eq:sin2-decomp} and therefore \eqref{eq:W-decomp}.

  To verify that $W_t^*$ is the restriction of a polynomial, we note
  that by \eqref{eq:W-discr}, \eqref{eq:varphi*-defn},
  and $\supp(\hat\varphi) \subseteq [-1,1]$,
  \begin{align}
    W_t^*(\lambda)
    =
    \varphi^*_t(\arccos(1-\thalf \lambda))
    &= \sum_{k\in\Z} t^{-1} \hat\varphi(k/t) \cos(k\arccos(1-\thalf \lambda))
    \\
    &= \sum_{k \in \Z\cap[-t,t]} t^{-1} \hat\varphi(k/t) T_k(1-\thalf \lambda)
    \nonumber
  \end{align}
  where $T_k, k \in \Z$, are the Chebyshev polynomials defined by
  \eqref{eq:Chebyshev}.  This shows that $W_t^*(\lambda)$ is indeed
  the restriction of a polynomial in $\lambda$ of degree at most $t$ to the
  interval $\lambda \in [0,4]$.  In particular, \eqref{eq:W-support}
  is a trivial consequence of \smash{\eqref{eq:polynom-gamma}} which states
  that polynomials in $L$ of degree $n$ have range at most $\theta(n)$.
  Also, for $t < 1$,
  \begin{equation}
    W_t^*(\lambda) = t^{-1} \hat\varphi(0) = Ct^{-1}.
  \end{equation}

  Finally, we verify the estimate \eqref{eq:W-decay-derivatives} and
  thus in particular \eqref{eq:W-decay}, for $t \geq 1$.
  To this end, we note that, in analogy to \eqref{eq:sqrt-lambda-derivative},
  for $\lambda \in [0,4-\varepsilon]$ and non-negative integers $m$,
  \begin{equation}
    \lambda^{m-\half} \left| \ddp{^m}{\lambda^m}\arccos(1-\thalf \lambda)\right| \leq C_{\varepsilon,m}
    .
  \end{equation}
  For example, for $m=1$,
  \begin{equation}
    \ddp{}{\lambda} \arccos(1-\thalf \lambda) = \thalf (\lambda-\tquarter \lambda^2)^{-\half}
    \leq \varepsilon^{-\half} \lambda^{-\half} \quad \text{for $\lambda \in [0,4-\varepsilon]$}
    .
  \end{equation}
  Therefore \eqref{eq:W-decay-derivatives} follows, by the chain rule (or Fa\`a di Bruno's formula), from
  \begin{equation} \label{eq:varphi*-decay}
    (1+t^2(1-\cos(x))^l t^{-m} \left| \ddp{^{m}}{x^{m}} \varphi^*_t(x)\right|  \leq C_{l,m},
  \end{equation}
  which holds for $t \geq 1$,
  as we will now show. The argument is essentially a discrete
  version of the classic fact that the Fourier transform acts
  continuously on the Schwartz space of smooth and rapidly decaying
  functions on $\R$.  To show \eqref{eq:varphi*-decay}, first note
  that
  \begin{equation}
    (1-\cos(x))e^{ikx}
    = e^{ikx}-\thalf e^{i(k+1)x} -\thalf e^{i(k-1)x}
    =:  \Delta_ke^{ikx}
  \end{equation}
  and thus by induction, for any $l \in \N$,
  \begin{equation}
    (1-\cos(x))^le^{ikx}
    = (1-\cos(x))^{l-1} \Delta_ke^{ikx}
    = \Delta_k (1-\cos(x))^{l-1} e^{ikx}
    = \Delta_k^l e^{ikx}
    .
  \end{equation}
  It follows by \eqref{eq:varphi*-defn} and summation by parts that
  \begin{align} \label{eq:varphi*-sumbyparts}
    (1+t^2(1-\cos(x))^l t^{-m} \ddp{^{m}}{x^{m}} \varphi_t^*(x)
    &=\sum_{k\in\Z} t^{-1} \hat\varphi(k/t)(ik/t)^{m} [(1 + t^2 \Delta_k)^l e^{ikx}]
    \\
    &=\sum_{k\in\Z} [(1 + t^2 \Delta_k)^l t^{-1} \hat\varphi(k/t) (ik/t)^{m}] e^{ikx}
    \nonumber
    .
  \end{align}
  Let $h(s) = \half (|s|-1) 1_{|s|\leq 1}$ for $s \in \R$.
  Then, for any smooth $f: \R  \to \R$,
  \begin{equation} \label{eq:Delta-conv}
    \Delta_k^nf(k) = (h^{*n} * D^{2n}f)(k), 
  \end{equation}
  where $*$ denotes convolution of two functions on $\R$,
  $h^{*n} = h * h * \dots * h$, and $Df$ is the derivative of $f$.
  Indeed,
  \begin{align}
    \Delta_k f(k)
    &= -\thalf \int_0^1 [Df(k+t) - Df(k-t)] \; dt
    \\
    &= -\thalf \int_0^1 \int_{-t}^t D^2f(k+s) \; ds \; dt
    = \int_{\R} D^2f(s) h(s-k) \; ds = (h * D^2f)(k)
    ,
    \nonumber
  \end{align}
  and
  \eqref{eq:Delta-conv} then follows by induction:
  \begin{equation}
    \Delta^{n+1}_{k}f
    = \Delta (h^{*n} * D^{2n} f)
    = h * D^2(h^{*n} * D^{2n} f)
    = h * h^{*n} * D^2D^{2n} f.
  \end{equation}
  It then follows using the facts that
  $\sum_{k\in\Z} |h^{*n}(k-s)| \leq C_n$, uniformly in $s \in \R$,
  and that $\hat\varphi$ is smooth and of rapid decay, for $t \geq 1$,
  \begin{align}
    & t^{-1} \sum_{k\in\Z} \Big| (1 + t^2\Delta_k^2)^l [\hat\varphi(k/t) (ik/t)^{m}] \Big|
    \\
    &= \sum_{n=0}^l C_{l,n} t^{-1} \sum_{k\in\Z} \int_{\R} |h^{*n}(k-s)|\, |[D^{2n}((\cdot)^{m} \hat\varphi)](s/t)| \; ds
    \nonumber
    \\
    &\leq Ct^{-1}+ \sum_{n=1}^l C_{l,n} t^{-1} \int_{\R} |[D^{2n}((\cdot)^{m}\hat\varphi)](s/t)| \; ds
    \nonumber
    \\
    &= Ct^{-1}+ \sum_{n=1}^l C_{l,n} \int_{\R} |[D^{2n}((\cdot)^{m}\hat\varphi)](s)| \; ds
    \leq C_{m,l}
    \nonumber
  \end{align}
  and thus \eqref{eq:varphi*-decay}, and therefore \eqref{eq:W-decay-derivatives},
  follow from this inequality and \eqref{eq:varphi*-sumbyparts}.
\end{proof}

\begin{proof}[Proof of Lemma~\ref{lem:W}]
  Lemma~\ref{lem:W} under \smash{\eqref{eq:prop-speed-gamma}} follows
  immediately from Lemma~\ref{lem:W-cont}; under
  \eqref{eq:polynom-gamma}, it follows from Lemma~\ref{lem:W-discr} by
  setting $W_t(\lambda) = W_t^*(\frac{3}{B} \lambda)$.
\end{proof}

\section{Extensions}

\subsection{Discrete approximation}

In view of the discussion about Chebyshev polynomials before
Lemma~\ref{lem:W-discr}, it is not surprising that the functions
$W_t^*$ of Lemma~\ref{lem:W-discr} approximate the $W_t$
of Lemma~\ref{lem:W-cont}.
In Proposition~\ref{prop:W-approximation} below, we show that this is indeed
the case with natural error $O(t^{-1})$ as $t\to\infty$.  This result is used
in Section~\ref{sec:constant-coefficients} to prove
\eqref{eq:const-coeff-w-w*-approximation}.

\begin{proposition}[Discrete
  approximation] \label{prop:W-approximation} Let $\varphi$ be as in
  Lemma~\ref{lem:W-cont} and \ref{lem:W-discr}, with associated
  functions $W_t$ and $W_t^*$ for $\gamma=1$.  Then, for any integer $l$,
  \begin{equation} \label{eq:W-approximation}
    |W_t^*(\lambda) - W_t(\lambda)| \leq C_l (1\vee t)^{-1} (1+t^2 \lambda)^{-l}
    \quad \text{for all $\lambda \in [0,4]$.}
  \end{equation}
  In particular, $W_t^*(\lambda/t^2) \to C \varphi(\lambda^{\half})$ as $t\to\infty$.
\end{proposition}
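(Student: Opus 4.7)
The plan is to compare the two explicit representations from Lemmas~\ref{lem:W-cont} and~\ref{lem:W-discr} via the substitution $x = \arccos(1-\tfrac{1}{2}\lambda) \in [0,\pi]$, under which $\lambda^{1/2} = 2\sin(x/2)$, and, absorbing the constant $C$,
\[
W_t(\lambda) = \varphi(2\sin(x/2)\,t), \qquad W_t^*(\lambda) = \sum_{n\in\Z}\varphi\bigl((x-2\pi n)t\bigr).
\]
Splitting $W_t^* - W_t = [\varphi(xt) - \varphi(2\sin(x/2)t)] + \sum_{n\neq 0}\varphi((x-2\pi n)t)$ isolates a dispersion error and a periodization tail, which I would estimate separately. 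The key input throughout is that $\hat\varphi \in C_c^\infty(\R)$ implies $\varphi$ is Schwartz, so $\varphi$ and all its derivatives decay faster than any polynomial.

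The tail is easy: $|x - 2\pi n| \geq \pi$ on $[0,\pi]$ for $n\neq 0$, so the Schwartz decay of $\varphi$ gives $|\sum_{n\neq 0}\varphi((x-2\pi n)t)| \leq C_N t^{-N}$ for any $N$ and $t\geq 1$, which is absorbed into $C_l t^{-1}(1+t^2\lambda)^{-l}$ (using the trivial bound $1+t^2\lambda \leq 1+4t^2$) once $N$ is chosen large relative to $l$. The main term is handled by the mean value theorem,
\[
\varphi(xt) - \varphi(2\sin(x/2)t) = \varphi'(\xi)\bigl(x - 2\sin(x/2)\bigr)t,
\]
with $\xi$ between $xt$ and $2\sin(x/2)t$. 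Taylor expansion yields the crucial dispersion estimate $|x - 2\sin(x/2)| \leq Cx^3$ on $[0,\pi]$, and the concavity bound $2\sin(x/2)\geq 2x/\pi$ forces $\xi \geq (2/\pi)xt$, so the Schwartz decay of $\varphi'$ produces $|\varphi'(\xi)|\leq C_N(1+xt)^{-N}$ for any $N$.

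Combining and using $(1+t^2\lambda)^{-l} \asymp (1+xt)^{-2l}$ on $[0,\pi]$ (since $\lambda = 4\sin^2(x/2) \asymp x^2$ there), the main term is bounded by $C_N x^3 t(1+xt)^{-N}$, and a routine case split into $xt \leq 1$ versus $xt \geq 1$ with $N \geq 2l+3$ gives the desired $C_l t^{-1}(1+t^2\lambda)^{-l}$ for $t\geq 1$. The $t \leq 1$ regime reduces to the triangle inequality together with \eqref{eq:W-decay} applied separately to $W_t$ and $W_t^*$, since then $(1\vee t)^{-1} = 1$. The closing asymptotic $W_t^*(\lambda/t^2)\to C\varphi(\lambda^{1/2})$ then follows by substituting $\lambda/t^2$ in \eqref{eq:W-approximation}: the right-hand side becomes $C_l t^{-1}(1+\lambda)^{-l}\to 0$, while $W_t(\lambda/t^2) = C\varphi(\lambda^{1/2})$ by definition. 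The main obstacle is purely quantitative: the cubic dispersion error $O(x^3)$ multiplied by $t$ must be converted into the advertised $t^{-1}$ gain uniformly in $\lambda \in [0,4]$, and this is exactly where the rapid decay of $\varphi'$ enters.
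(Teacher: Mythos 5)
Your proof is correct and follows essentially the same route as the paper: split $W_t^* - W_t$ into the $n=0$ dispersion error plus the periodization tail, reduce to $t\geq 1$ via \eqref{eq:W-decay}, and estimate each piece using the rapid decay of $\varphi$ and $\varphi'$. The only cosmetic differences are that you use the mean value theorem where the paper writes the integral form $\zeta_t(\lambda)=\int_0^1\varphi'(\cdots)\,ds$, you invoke the cubic bound $|x-2\sin(x/2)|\leq Cx^3$ rather than the paper's weaker (but still sufficient) $\arccos(1-\lambda)-\sqrt{2\lambda}=O(\lambda)$, and you bound the tail by a crude $C_N t^{-N}$ rather than the paper's slightly sharper $(1+xt)^{-l/2}t^{-l/2}$; all of these are interchangeable here.
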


\begin{proof}
  Note that it suffices to restrict to $t\geq 1$, since for $t \leq 1$, the claim follows from
  \eqref{eq:W-decay}.
  The left-hand side of \eqref{eq:W-approximation}
  is then proportional to the absolute value of
  \begin{equation} \label{eq:W-approximation-rewritten}
    \varphi(\arccos(1-\thalf \lambda)t)-\varphi(\lambda^\half t)
    + \sum_{n\in \Z\setminus \{0\}} \varphi(\arccos(1-\thalf \lambda)t + 2\pi n t)
    .
  \end{equation}
  We estimate the difference of the first two terms in \eqref{eq:W-approximation-rewritten}
  and the sum separately, and show that each of them satisfies \eqref{eq:W-approximation}.
  The first two terms can be written as
  \begin{equation}
    \varphi(\arccos(1-\thalf \lambda)t)-\varphi(\lambda^\half t)
    = (\arccos(1-\thalf \lambda)-\lambda^\half )t \zeta_t(\lambda)
  \end{equation}
  with
  \begin{equation}
    \zeta_t(\lambda) 
    =
    \int_0^1 \varphi'(s \arccos(1-\thalf \lambda)t + (1-s) \lambda^\half t) \; ds.
  \end{equation}
  The bounds
  \begin{gather} \label{eq:sqrt-arccos-O}
    \sqrt{2\lambda} = \arccos(1-\lambda) + O(\lambda)
    \quad \text{as $\lambda \to 0+$}
    ,
    \\
    \label{eq:sqrt-arccos-bound}
    \sqrt{2\lambda} \leq \arccos(1-\lambda)\leq \tfrac{\pi}{2} \sqrt{2\lambda}
    \quad \text{for all $\lambda \in [0,2]$}
    ,
  \end{gather}
  and the rapid decay of $\varphi'$ therefore imply that
  \begin{equation}
    |\zeta_t(\lambda)| \leq C_l (1+\lambda t^2)^{-l}
  \end{equation}
  and
  \begin{equation} \label{varphi*-varphi-main-term-estimate}
    \varphi(\arccos(1-\thalf \lambda)t)-\varphi(\lambda^\half t)
    \leq C_l t^{-1} (1+t^2\lambda)^{-l}.
  \end{equation}
  
  To estimate the sum in \eqref{eq:W-approximation-rewritten},
  we can use the rapid decay of $\varphi$ with the inequality
  $x+y \geq 2 (xy)^{1/2}$ to obtain that
  \begin{align}
    \sum_{n\in \Z\setminus \{0\}} \varphi(xt + 2\pi n t)
    &\leq
    C_l \sum_{n\in \Z\setminus \{0\}} (1+xt + 2\pi n t)^{-l}
    \\
    &\leq
    C_l  (1+xt)^{-l/2} t^{-l/2} \sum_{n>0} n^{-l/2}
    \nonumber
    \leq
    C_l  (1+xt)^{-l/2} t^{-l/2}
    \nonumber
  \end{align}
  for any $l > 2$, with the constant changing from line to line.
  In particular, upon substituting $x=\arccos(1-\thalf \lambda)$,
  this bound and \eqref{eq:sqrt-arccos-bound} imply
  \begin{equation} \label{eq:varphi*-sum-estimate}
    \sum_{n\in \Z\setminus \{0\}} \varphi(\arccos(1-\thalf\lambda)t + 2\pi n t)
    \leq C_l t^{-2l} (1+t^2\lambda)^{-l}
    .
  \end{equation}
  The claim then follows by adding \eqref{varphi*-varphi-main-term-estimate} and \eqref{eq:varphi*-sum-estimate}.
\end{proof}

\subsection{Estimates for systems with constant coefficients}
\label{sec:constant-coefficients}

In this section, we verify the assertions of Example~\ref{ex:constant-coefficients}.
We work in the slightly more general context of second-order elliptic systems (instead of operators)
with constant coefficients. These are defined as in Example~\ref{ex:systems}, 
and the claims of Example~\ref{ex:constant-coefficients}
hold mutadis mutandis.
The analysis is straightforward, with aid of the Fourier transform.
It reproduces results of \cite{AKM12a}.

\subsubsection{Spectral measures}

The spectral measures corresponding to the vector-valued case of \eqref{eq:const-coeff-dirichlet} are
given in terms of the Fourier transform as follows. For $F: [0,\infty) \to \R$,
\begin{equation}
  (v, F(L)u)
  =
  \sum_{k,l=1}^M \int_{\R^{d}} \left[F\left( \sum_{i,j=1}^d a_{ij} \xi_i\xi_j\right)\right]_{kl}
  \; \smash{\overline{\hat v}}^k(\xi) \hat u^l(\xi) \; d\xi
\end{equation}
where $\hat u = (\hat u^1, \dots, \hat u^M)$ is the component-wise Fourier transform of $u = (u^1, \dots, u^M)$,
\begin{equation}
  a(\xi)
  := \sum_{i,j=1}^d a_{ij} \xi_i\xi_j
  = \left(\sum_{i,j=1}^d a_{ij}^{kl} \xi_i\xi_j\right)_{k,l=1,\dots,M}
\end{equation}
are symmetric positive definite $M \times M$ matrices, for all $\xi \in \R^d$,
and the matrices $F(a(\xi))$ are defined in terms of the spectral decomposition of $a(\xi)$.
Similarly, for the (vector-valued case of the) discrete Dirichlet form \eqref{eq:const-coeff-dirichlet-discrete},
\begin{equation}
  (v, F(L)u)
  = \sum_{k,l=1}^M \int_{[-\pi,\pi]^{d}} \left[F\left(\sum_{i,j=1}^d a_{ij} (1-e^{i\xi_i})(1-e^{-i\xi_j})\right)\right]_{kl}
  \; \smash{\overline{\hat v}}^k(\xi) \hat u^l(\xi) \; d\xi
\end{equation}
where here $\hat u$ is the component-wise discrete Fourier transform. Let us also write
\begin{equation}
  a^*(\xi)
  := \sum_{i,j=1}^d a_{ij} (1-e^{i\xi_i})(1-e^{-i\xi_j})
  = \left(\sum_{i,j=1}^d a_{ij}^{kl} (1-e^{i\xi_i})(1-e^{-i\xi_j})  \right)_{k,l=1,\dots,M}
  .
\end{equation}
We will often use, without mentioning this further,
that the spectra of $a(\xi)$ and $a^*(\xi)$ are bounded from above and from below by $|\xi|^2$.

\subsubsection{Estimates}

Let us introduce the following notation for derivatives:
For a function $u : \R^d \to \R$, we regard the $l$th derivative,
$D^lu(x)$, as an $l$-linear form, 
and $|D^lu(x)|$ is a norm of the form $D^lu(x)$. In terms of the Fourier
transform, we denote by $\smash{\hat D^{l}}(\xi)$ the
corresponding ``multiplier'' operator from functions to $l$-linear
forms, and by $|\smash{\hat D^l}(\xi)|$ its norm. Similarly, for a
discrete function $u: \Z^d \to \R$, the $l$th order discrete difference in
\emph{positive coordinate direction} is
denoted by $\nabla^lu(x)$ and has Fourier multiplier
$\smash{\hat \nabla^l(\xi)}$.
In particular, when $l=1$,
\begin{equation} \label{eq:hatD-hatnabla}
  \hat D(\xi) \cong (i\xi_1, \dots, i\xi_d),
  \quad
  \hat \nabla(\xi) \cong (e^{i\xi_1}-1, \dots, e^{i\xi_d}-1).
\end{equation}
Furthermore, $k$ and $p$ will denote integers that may be chosen arbitrarily,
and $C$ constants that can change from instance to instance and may depend on $k$ and $p$,
as well as $l=(l_x,l_y,l_a,l_{\msq})$, $B_+$, $B_-$, and $M_+$, but not on $x$, $\xi$, and $m$.

\begin{proof}[Proof of \eqref{eq:const-coeff-w-hom},\eqref{eq:const-coeff-w-est},\eqref{eq:const-coeff-w*-est}]
  We may assume that $t \geq 1$.
  It follows by the change of variables $\xi \mapsto t\xi$, from the fact that
  $a(\xi)$ is homogeneous of degree~$2$, and from $W_t(\lambda) = W_1(\lambda t^2)$ that
  \begin{align} \label{eq:w-wbar}
    \phi_t(x,y; a, m^2)
    &= t^2 \int_{\R^d} W_t(a(\xi) +m^2) e^{i (x-y) \cdot \xi}\; d\xi
    \\
    &= t^{-(d-2)} \bar \phi (\frac{x-y}{t}; a, m^2t^2)
    \nonumber
  \end{align}
  with
  \begin{equation} \label{eq:wbar-formula}
    \bar \phi(x; a, m^2) := \int_{\R^d} W_1(a(\xi) +m^2) e^{i (x-y) \cdot \xi}\; d\xi
  \end{equation}
  which is supported in $|x|\leq B_+$. This verifies \eqref{eq:const-coeff-w-hom}.
  Furthermore, \eqref{eq:const-coeff-w-est} is a straightforward
  consequence of \eqref{eq:w-wbar} by differentiation and
  \eqref{eq:W-decay-derivatives}.  Let us omit the details and only
  verify them explicitly in the discrete case
  \eqref{eq:const-coeff-w*-est}:
  The (derivatives of the) decomposition kernel $\phi_t^*$ can here be expressed as
  \begin{equation}
    D_a^{l_a} D_{\msq}^{l_{\msq}}\nabla_x^{l_x} \nabla_y^{\smash{l_y}} \phi_t^*(x,y; a, \msq)
    = t^{-(d-2)-l_x-l_y+2l_{\msq}} \bar \phi_{t;l}^*(x-y; a,\msq)
  \end{equation}
  with
  \begin{equation}
    \bar \phi_{t;l}^*(x; a,m^2)
    = t^{d+l_x+l_y-2l_{\msq}} \int_{[-\pi,\pi]^d} D_a^{l_a} D_{\msq}^{l_{\msq}}W_t^*(a^*(\xi) +m^2)
    \smash{\overline{\hat \nabla}}^{l_y} \hat\nabla^{l_x} e^{i x\cdot \xi} \; d\xi
    .
  \end{equation}
  Thus \eqref{eq:W-decay-derivatives}, $|\smash{\hat \nabla}(\xi)| \leq C|\xi|$,
  and $\eta \cdot a^*(\xi)\eta  \geq C|\xi|^2 |\eta|^2$ for $\eta \in \R^M$ imply
  \begin{align}
    |\bar \phi_{t;l}^*(x; a,m^2)|
    &\leq
    C \int_{[-\pi,\pi]^d} (1+C|\xi|^2 t^2 +m^2 t^2)^{-k-p} (t|\xi|)^{l_x+l_y-2l_{\msq}} \; t^d d\xi
    \\
    &\leq 
    C (1+m^2 t^2)^{-k} \int_{\R^d} (1+C|\xi|^2)^{-p} |\xi|^{l_x+l_y-2l_{\msq}} \; d\xi
    \nonumber
  \end{align}
  and therefore that the integral converges if $\thalf(d+l_x+l_y) > l_{\msq}$
  and $p$ is chosen sufficiently large. It follows that
  \begin{equation}
    |\bar \phi_{t;l}^*(x; a,m^2)|
    \leq
    C (1+m^2t^2)^{-k}
  \end{equation}
  verifying the claim.
\end{proof}

\begin{proof}[Proof of \eqref{eq:const-coeff-w-w*-approximation}]
  Let us assume that $B=3$. Then
  \begin{align} \label{eq:w-w*-1}
    \nabla_x^{l_x}\nabla_y^{\smash{l_y}} \phi_t^*(x,y)-D_x^{l_x}D_y^{\smash{l_y}} \phi_t(x,y)
    &= t^2 \int_{[-\pi,\pi]^d} W_t^*(a^*(\xi)) \hat\nabla^{l_x} \smash{\overline{\hat \nabla}}^{l_y} e^{i\xi\cdot (x-y)}\; d\xi
    \\
    &\quad - t^2 \int_{\R^d} W_t(a(\xi)) \hat D^{l_x} \smash{\overline{\hat D}}^{l_y} e^{i\xi\cdot (x-y)}\; d\xi
    .
    \nonumber
  \end{align}
  To simplify notation, we will write
  $\hat D^l = \hat D^{l_x} \smash{\overline{\hat D}}^{l_y}=\hat D^{l_x} \otimes \smash{\overline{\hat D}}^{l_y}$
  if $l=(l_x,l_y)$, and similarly for $\nabla$.
  Then the difference \eqref{eq:w-w*-1}
  may be estimated as follows. Proposition~\ref{prop:W-approximation} implies
  \begin{multline}
    \int_{[-\pi,\pi]^d} |W_t^*(a^*(\xi)+m^2) - W_t(a^*(\xi)+m^2)| |\hat D^l(\xi)| \; d\xi
    \\
    \leq 
    C t^{-1} \int_{\R^d} (1+C|\xi|^2 t^2 + m^2 t^2)^{-p-k} |\xi|^{l} \; d\xi
    \leq C t^{-d-l-1} (1+m^2t^2)^{-k}
  \end{multline}
  where we have assumed in the second inequality above that $p$ was chosen sufficiently large
  so that the integral is convergent.
  Similarly,
  we may proceed for the other differences, always choosing $p$ large enough in the estimates.
  Using \eqref{eq:W-decay-derivatives} with $m=1$
  and $|a^*(\xi) - a(\xi)| = O(|\xi|^3)$, which follows from Taylor's theorem, we obtain
  \begin{multline}
    \int_{[-\pi,\pi]^d} |W_t(a^*(\xi)+m^2) - W_t(a(\xi)+m^2)| |\hat D^l(\xi)| \; d\xi
    \\
    \leq 
    C \int_{\R^d}|\xi| (1+C|\xi|^2 t^2 + m^2 t^2)^{-p-k} |\xi|^{l} \; d\xi
    \leq
    C t^{-d-l-1} (1+m^2t^2)^{-k}
    .
  \end{multline}
  Taylor's theorem similarly implies that $|\hat \nabla^l(\xi) - \hat D^l(\xi)| \leq C |\xi|^{l+1}$ so that,
  by \eqref{eq:W-decay},
  \begin{multline}
    \int_{[-\pi,\pi]^d} |W_t^*(a^*(\xi)+m^2)| |\hat \nabla^l(\xi) - \hat D^l(\xi)| \; d\xi
    \\
    \leq 
    C \int_{\R^d} (1+C|\xi|^2t^2+m^2t)^{-p-k} |\xi|^{l+1} \; d\xi
    \leq
    C t^{-d-l-1} (1+m^2t^2)^{-k}
    .
  \end{multline}
  Finally, we obtain by \eqref{eq:W-decay} that
  \begin{multline}
    \int_{\R^d\setminus [-\pi,\pi]^d} |W_t(a(\xi)+m^2)| |\hat D^{l}(\xi)| \; d\xi
    \\
    \leq 
    C \int_{\R^d\setminus [-\pi,\pi]^d} (1+C|\xi|^2t^2+m^2t^2)^{-p-k} |\xi|^{l}\; d\xi
    \leq
    C t^{-2p} (1+m^2t^2)^{-k}
    .
  \end{multline}
  The combination of the previous four inequalities gives \eqref{eq:const-coeff-w-w*-approximation}.
\end{proof}

\section*{Acknowledgements}

The author thanks David Brydges and Gordon Slade for many helpful discussions,
advice, and careful proofreading. He also thanks Martin Barlow for helpful discussions.

\bibliographystyle{plain}

\def\polhk#1{\setbox0=\hbox{#1}{\ooalign{\hidewidth
  \lower1.5ex\hbox{`}\hidewidth\crcr\unhbox0}}}\def\polhk#1{\setbox0=\hbox{#1}{\ooalign{\hidewidth
  \lower1.5ex\hbox{`}\hidewidth\crcr\unhbox0}}}\def\polhk#1{\setbox0=\hbox{#1}{\ooalign{\hidewidth
  \lower1.5ex\hbox{`}\hidewidth\crcr\unhbox0}}}\def\cprime{$'$}

\end{document}